\theoremstyle{plain}
\newtheorem{mainthm}{Theorem}
\newtheorem{theorem}{Theorem}[section]
\newtheorem{corollary}[theorem]{Corollary}
\newtheorem{lemma}[theorem]{Lemma}
\theoremstyle{definition}
\newtheorem*{definitions}{Definitions}
\newtheorem*{example}{Example}
\newtheorem{q}[theorem]{Question}
\newcommand{\Z}{\mathbb{Z}}
\newcommand{\N}{\mathbb{N}}
\newcommand{\eps}{\varepsilon}
\newcommand{\diam}{\operatorname{diam}}
\begin{document}

\begin{abstract}
We prove that the two-sided limit shadowing property is among the strong\-est known notions of pseudo-orbit tracing. It implies shadowing, average shadowing, asymptotic average shadowing and specification properties. We also introduce a weaker notion that is called two-sided limit shadowing with a gap and prove that it implies shadowing and transitivity. We show that those two properties allow to characterize topological transitivity and mixing in a class of expansive homeomorphisms and hence they characterize transitive (mixing) shifts of finite type.
\end{abstract}

\title[The two-sided limit shadowing property]{On homeomorphisms with the two-sided limit shadowing property}

\author[Bernardo Carvalho]{Bernardo Carvalho}
\author[Dominik Kwietniak] {Dominik Kwietniak}

\date{\today}

\thanks{2010 \emph{Mathematics Subject Classification}: Primary 37D20; Secondary 37C20.}
\keywords{Pseudo-orbit, Shadowing, Limit Shadowing, Transitivity.}
\thanks{This paper was partially supported by CNPq (Brazil). Research of Dominik Kwietniak was supported by the Polish Ministry of Science and Higher Education grant Iuventus Plus no. IP 2011028771. This research was initiated when the second author was visiting the Federal University of Rio de Janeiro. It was also conducted during a visit of the first author to the Jagiellonian Univeristy in Krakow. The financial support and kind hospitality of these institutions are gratefully acknowledged.}

\address[Bernardo Carvalho]{Universidade Federal do Rio de Janeiro - UFRJ}
\email{bmcarvalho06@gmail.com}
\address[Dominik Kwietniak]{Faculty of Mathematics and Computer Science, Jagiellonian Univesrity in Krak\'{o}w}
\email{dominik.kwietniak@uj.edu.pl}
\urladdr{www.im.uj.edu.pl/DominikKwietniak/}
\maketitle

\section{Introduction}





The theory of shadowing has been intensively developed in recent years. It is of extreme importance in the qualitative study of dynamical systems. Apart of the now standard shadowing property various variants of this concept were proposed. These new properties arise from modifications of the notion of pseudo-orbit together with different definitions of shadowing points. The main property that we consider in this paper is called \emph{two-sided limit shadowing}.
\begin{definitions}
We say that a double-infinite sequence $(x_i)_{i\in\Z}$ of points from a metric space $(X,d)$ is a \emph{two-sided limit pseudo-orbit} for a homeomorphism $f\colon X\to X$ if it satisfies $$d(f(x_i),x_{i+1})\rightarrow 0, \,\,\,\,\,\, |i|\rightarrow\infty.$$
A sequence $(x_i)_{i\in\Z}$ of points from $X$ is \emph{two-sided limit shadowed} if there exists $y\in X$ satisfying $$d(f^i(y),x_i)\rightarrow 0, \,\,\,\,\,\, |i|\rightarrow \infty.$$ In this situation we also say that $y$ \emph{two-sided limit shadows} $(x_i)_{i\in\Z}$ with respect to $f$. Finally, we say that $f$ has the \emph{two-sided limit shadowing property} if every two-sided limit pseudo-orbit is two-sided limit shadowed.
\end{definitions}
This notion was explored by the first author in \cite{C} where he proved that the two-sided limit shadowing property is related with a well known open problem asking whether every Anosov diffeomorphism is transitive. In fact, for Anosov diffeomorphisms on a differentiable manifold two-sided limit shadowing is equivalent to transitivity. In this paper we study this interesting property in detail. To this end we find it convenient to generalize this notion and introduce a strictly weaker property which we call \emph{two-sided limit shadowing with a gap}.
\begin{definitions}
We say that a sequence $(x_i)_{i\in\Z}$ is \emph{two-sided limit shadowed with gap $K\in\Z$ for $f$} if there exists
a point $y\in X$ satisfying
\begin{align*}
d(f^i(y),x_i)\to 0, \,\,\,\,\,\, i\to-\infty,\\
d(f^{K+i}(y),x_i)\to 0 \,\,\,\,\,\, i\to\infty.
\end{align*}
For $N\in\N_0$ we say that $f$ has the \emph{two-sided limit shadowing property with gap $N$} if every two-sided limit pseudo-orbit of $f$ is two sided limit shadowed with gap $K\in \Z$ with $|K|\le N$. We also say that $f$ has the \emph{two-sided limit shadowing property with a gap} if such an $N\in\N$ exists.
\end{definitions}

It is obvious that the two-sided limit shadowing property with gap $0$ is equivalent to the notion of two-sided limit shadowing defined above. It is also obvious that two-sided limit shadowing implies two-sided limit shadowing with a gap. Our results show that the two-sided limit shadowing property is the strongest among many existing variants of pseudo-orbit tracing properties. We prove that two-sided limit shadowing with a gap implies shadowing and transitivity, and gap $0$ implies even the specification property (Theorems \ref{thmA} and \ref{thmB} below). Combining this with results of \cite{KKO} we conclude that many shadowing properties follow from two-sided limit shadowing.

We show that the two-sided limit shadowing property with a gap may be strictly weaker than gap $0$. To this end we show that a two-point cycle is an example of a system which has the two-sided limit shadowing property with gap $1$ but do not have the two-sided limit shadowing property. Moreover, it follows from our results on expansive homeomorphism that for each $N\in \N$ there are examples of shifts of finite type with the two-sided limit shadowing property with gap $N$, which do not posses this property with a smaller gap.

Next we characterize the two-sided limit shadowing property with a gap for expansive homeomorphisms. We prove that for a transitive and expansive homeomorphism, shadowing, limit shadowing and two-sided limit shadowing with a gap are all equivalent (Theorem \ref{thmC}). It follows that for expansive homeomorphisms two-sided limit shadowing with a gap is equivalent to shadowing and transitivity, while gap $0$ is equivalent to topological mixing (Theorem \ref{thmD}).
As a corollary we extend the main result of \cite{C} to topologically Anosov homeomorphisms (Theorem \ref{generalized-Bernardo}). We also obtain a variant of Walters theorem from \cite{WaltersPOTP} (see also \cite[Theorem 2.3.18]{AH}) characterizing shifts of finite type as the shift spaces with shadowing. We show in Theorem \ref{thmE} that a shift space $X\subset A^\Z$ over a finite alphabet $A$ has the two-sided limit shadowing property with a gap if and only if $X$ is a transitive subshift of finite type. Moreover, gap $0$ is in this situation equivalent to topological mixing of $X$.
At the end we prove some lemmas which imply that there are systems with two-sided limit shadowing outside the class of expansive homeomorphisms.

\section{Preliminaries}

Through this paper $(X,d)$ is a compact metric space and $f\colon X\to X$ is a homeomorphism. By $\N$ we denote the set of positive integers and  $\N_0=\N\cup\{0\}$.
\begin{definitions}[Pseudo-orbits and shadowing]
Let $I\subset\Z$ be a nonempty set of consecutive integers. We say that a sequence $(x_i)_{i\in I}\subset X$ is a \emph{$\delta$-pseudo-orbit (for $f$)} if it satisfies $$d(f(x_i),x_{i+1})<\delta, \quad \text{for all } \,\,i \,\,\, \text{such that }i,i+1 \in I.$$
We call a pseudo-orbit \emph{finite} (\emph{positive, negative, two-sided}, respectively) if $I$ is finite ($I=\N_0$, $I=-\N_0$, $I=\Z$, respectively). A sequence $\{x_i\}_{i\in I}\subset X$ is $\eps$-\emph{shadowed (with respect to $f$)\footnote{From now on we omit references to $f$ when it is clear from the context which $f$ we have in mind.}} if there exists $y\in X$ satisfying $$d(f^i(y),x_i)<\eps, \quad \text{for all }i\in I.$$ We say that $f$ has the \emph{shadowing property} if for every $\eps>0$ there exists $\delta>0$ such that every two-sided $\delta$-pseudo-orbit is $\eps$-shadowed.
\end{definitions}

In our setting one may replace \emph{two-sided} by \emph{one-sided} or by \emph{finite} in the definition of shadowing without altering the notion. Excellent references for the shadowing property are Pilyugin's book \cite{P} and Aoki and Hiraide's monograph \cite{AH}. There are many variants of this property following the same schema: we define a notion of pseudo-orbit, then we introduce a concept of tracing, and we define a new notion of shadowing by demanding that each pseudo-orbit is traced in our new sense. We refer the reader to \cite{MK} for an unified approach to shadowing properties.

We will need the limit shadowing property introduced by Eirola, Nevanlinna and Pilyugin in \cite{ENP}.
\begin{definitions}[Limit shadowing]
We say that $(x_i)_{i\in\N_0}$ is a \emph{limit pseudo-orbit (for $f$)} if it satisfies $$d(f(x_i),x_{i+1})\rightarrow 0, \,\,\,\,\,\, i\rightarrow\infty.$$
A sequence $\{x_i\}_{i\in\N_0}$ is \emph{limit-shadowed} if there exists $y\in X$ such that $$d(f^i(y),x_i)\rightarrow 0, \,\,\,\,\,\, i\rightarrow \infty.$$
We say that $f$ has the \emph{limit shadowing property} if every limit pseudo-orbit is limit-shadowed.
\end{definitions}

The following lemma is obvious. We record it for further reference.
\begin{lemma}\label{2s-implies-lim}
If a homomorphism $f$ has the two-sided limit shadowing property with a gap, then $f$ and $f^{-1}$ have limit shadowing.
\end{lemma}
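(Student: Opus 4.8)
The plan is to unwind the definitions and observe that a two-sided limit pseudo-orbit contains, in particular, a positive (resp. negative) limit pseudo-orbit, and that the gap in the shadowing point can be absorbed by shifting the iterate we start from.

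First I would recall that $f$ has the two-sided limit shadowing property with gap $N$ for some $N\in\N_0$. Given a limit pseudo-orbit $(x_i)_{i\in\N_0}$ for $f$, extend it to a two-sided sequence $(\tilde x_i)_{i\in\Z}$ by setting $\tilde x_i = f^i(x_0)$ for $i<0$ and $\tilde x_i = x_i$ for $i\ge 0$. Then $d(f(\tilde x_i),\tilde x_{i+1})=0$ for all $i<0$, and $d(f(\tilde x_i),\tilde x_{i+1})=d(f(x_i),x_{i+1})\to 0$ as $i\to\infty$; hence $(\tilde x_i)_{i\in\Z}$ is a two-sided limit pseudo-orbit for $f$. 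By hypothesis there is $y\in X$ and $K\in\Z$ with $|K|\le N$ such that $d(f^i(y),\tilde x_i)\to 0$ as $i\to-\infty$ and $d(f^{K+i}(y),\tilde x_i)\to 0$ as $i\to\infty$. In particular, putting $z=f^{K}(y)$, we get $d(f^i(z),x_i)=d(f^{K+i}(y),\tilde x_i)\to 0$ as $i\to\infty$, so $(x_i)_{i\in\N_0}$ is limit-shadowed by $z$. Thus $f$ has limit shadowing.

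For $f^{-1}$, I would observe that $(x_i)_{i\in\Z}$ is a two-sided limit pseudo-orbit for $f$ if and only if $(x_{-i})_{i\in\Z}$ is a two-sided limit pseudo-orbit for $f^{-1}$, and $y$ two-sided limit shadows $(x_i)_{i\in\Z}$ with gap $K$ for $f$ precisely when (after relabelling) $f^{K}(y)$ two-sided limit shadows $(x_{-i})_{i\in\Z}$ with gap $-K$ for $f^{-1}$; in particular $f^{-1}$ has the two-sided limit shadowing property with gap $N$ as well. The previous paragraph applied to $f^{-1}$ then gives that $f^{-1}$ has limit shadowing.

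There is really no obstacle here — the only thing to be careful about is bookkeeping with the direction of the indices and the sign of the gap $K$, and the (trivial) remark that a finite shift of the shadowing point does not affect whether a one-sided limit pseudo-orbit is limit-shadowed. This is why the lemma is stated as obvious.
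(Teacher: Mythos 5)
Your proof is correct and supplies exactly the routine verification that the paper omits entirely (the lemma is recorded there without proof, as ``obvious''): prepend the backward orbit of $x_0$ to make a two-sided limit pseudo-orbit, shadow it with some gap $K$, and absorb the gap by replacing the shadowing point $y$ with $f^{K}(y)$. The only quibble is a sign in the bookkeeping for $f^{-1}$: with the natural choice $v=f^{K}(y)$ the reversed sequence $(x_{-i})_{i\in\Z}$ is shadowed with gap $K$ rather than $-K$ (and the time-reversal of the pseudo-orbit condition silently uses uniform continuity of $f^{\pm1}$ on the compact space $X$), but since only the bound $|K|\le N$ is used this affects nothing.
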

We will see later on that even if $f$ and $f^{-1}$ have limit shadowing, then $f$ need not to have two-sided limit shadowing.

\begin{definitions}[Topological transitivity and mixing]
We say that $f$ is \emph{transitive} if given any pair $(U,V)$ of nonempty open subsets of $X$ there exists $N\in\N$ such that $f^{N}(U)\cap V\neq\emptyset$. A map $f$ is \emph{topologically mixing} if for every pair $(U,V)$ of nonempty open subsets of $X$ there exists $N\in\N$ such that $f^n(U)\cap V\neq\emptyset$ for all $n\geq N$.
\end{definitions}
In our setting transitivity is equivalent to the existence of a point $x\in X$ such that its \emph{future orbit} $\{f^n(x);n\in\N\}$ is dense in $X$. It is proved in \cite{C} that an Anosov diffeomorphism is transitive if and only if it has the two-sided limit shadowing property.

\begin{definitions}[Specification]
Let $\tau=\{I_1,\dots,I_m\}$ be a finite collection of disjoint finite subsets of consecutive integers, $I_i=[a_i,b_i]\cap\Z$ for some $a_i,b_i\in\Z$, with
$$
a_1\le b_1 < a_2\le b_2 <\ldots < a_m\le b_m.$$
Let a map $P\colon \bigcup_{i=1}^mI_i\rightarrow X$ be such that for each $I\in \tau$ and $t_1, t_2\in I$ we have $$f^{t_2-t_1}(P(t_1))=P(t_2).$$
We call a pair $(\tau,P)$ a \emph{specification}. We say that the specification $S=(\tau,P)$ is \emph{$L$-spaced} if $a_{i+1}\geq b_i+L$ for all $i\in\{1,\dots,m-1\}$. Moreover, $S$ is \emph{$\eps$-shadowed} by $y\in X$ for $f$ if $$d(f^n(y),P(n))<\eps \,\,\,\,\,\, \textrm{for every} \,\,\, n\in \bigcup_{i=1}^mI_i.$$
We say that a homeomorphism $f\colon X\rightarrow X$ has the \emph{specification property} if for every $\eps>0$ there exists $L\in\N$ such that every $L$-spaced specification is $\eps$-shadowed. We say that $f$ has the \emph{periodic specification property} if for every $\eps>0$ there exists $L\in\N$ such that every $L$-spaced specification is $\eps$-shadowed by a periodic point $y$ such that $f^{b_m+L}(y)=y$. It is well known (see \cite{DGS}) that homeomorphisms with the specification property are topologically mixing.
\end{definitions}

\section{Consequences of the two-sided limit shadowing property}

We prove that the  two-sided limit shadowing property is the strongest one among many shadowing properties considered in the literature.
The first lemma is an easy consequence of the two-sided limit shadowing property with a gap and generalizes Lemma 2.4 in \cite{C}. For $x\in X$ we define the \emph{stable set of $x$} as the set of all points forward asymptotic to $x$, that is, $$W^s(x)=\{y\in X: \lim_{n\to\infty} d(f^n(y),f^n(x))=0\}.$$
Similarly, the \emph{unstable set of $x$} is the set $$W^u(x)=\{y\in X: \lim_{n\to\infty} d(f^{-n}(y),f^{-n}(x))=0\}.$$

\begin{lemma}\label{lem:2s-gap-connecting}
If a homeomorphism $f$ of a compact metric space $X$ has the two-sided limit shadowing property with gap $N$, then for every $x,y\in X$ there is an integer $K$, with $|K|\le N$, satisfying $$W^s(f^{-K}(x))\cap W^u(y)\neq\emptyset.$$
\end{lemma}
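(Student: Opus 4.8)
The plan is to build a two-sided limit pseudo-orbit that looks like the backward orbit of $y$ on the negative side and like the forward orbit of $x$ on the positive side, apply the two-sided limit shadowing property with gap $N$, and then read off the desired intersection point from the shadowing point.

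\smallskip

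First I would fix $x,y\in X$ and define the double-infinite sequence $(z_i)_{i\in\Z}$ by $z_i=f^i(y)$ for $i<0$ and $z_i=f^i(x)$ for $i\ge 0$. The only place where $(z_i)$ can fail to be an honest orbit is at the index $0$, where $d(f(z_{-1}),z_0)=d(f^0(y),f^0(x))$ \emph{need not be small}; but this does not matter, because the definition of a two-sided limit pseudo-orbit only requires $d(f(z_i),z_{i+1})\to 0$ as $|i|\to\infty$, and here $d(f(z_i),z_{i+1})=0$ for every $i\ne 0$. Hence $(z_i)_{i\in\Z}$ is trivially a two-sided limit pseudo-orbit for $f$.

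\smallskip

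Next, since $f$ has the two-sided limit shadowing property with gap $N$, there is a point $w\in X$ and an integer $K$ with $|K|\le N$ such that $d(f^i(w),z_i)\to 0$ as $i\to-\infty$ and $d(f^{K+i}(w),z_i)\to 0$ as $i\to\infty$. Translating back through the definition of $z_i$, the first condition says $d(f^i(w),f^i(y))\to 0$ as $i\to-\infty$, i.e. $d(f^{-n}(w),f^{-n}(y))\to 0$ as $n\to\infty$, which is exactly $w\in W^u(y)$. The second condition says $d(f^{K+i}(w),f^i(x))\to 0$ as $i\to\infty$; writing $v=f^K(w)$ this reads $d(f^{i}(v),f^{i}(x))\to 0$ as $i\to\infty$, so $v\in W^s(x)$, i.e. $w\in f^{-K}(W^s(x))=W^s(f^{-K}(x))$ (using that $f$ commutes with the limit defining the stable set). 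Therefore $w\in W^s(f^{-K}(x))\cap W^u(y)$, which is nonempty, and $|K|\le N$ as required.

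\smallskip

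I do not expect any serious obstacle here; the content is entirely bookkeeping about which orbit segment is glued where and tracking the gap $K$ through the two asymptotic conditions. The one point deserving a line of care is the reindexing in the stable-set conclusion: one must check that "$f^{K+i}(w)$ is forward asymptotic to $f^i(x)$'' is equivalent to "$w$ is in the stable set of $f^{-K}(x)$,'' which follows since $W^s(f^{-K}(x))=f^{-K}(W^s(x))$ for a homeomorphism $f$. Everything else is immediate from the definitions recalled above.
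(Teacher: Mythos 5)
Your proof is correct and follows essentially the same route as the paper's: both construct the glued sequence $z_i=f^i(y)$ for $i<0$, $z_i=f^i(x)$ for $i\ge 0$, observe it is trivially a two-sided limit pseudo-orbit, and read off the shadowing point as an element of $W^s(f^{-K}(x))\cap W^u(y)$. The extra care you take with the reindexing $W^s(f^{-K}(x))=f^{-K}(W^s(x))$ is correct and is implicit in the paper's last line.
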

\begin{proof} Take any $x,y\in X$.
Consider the following sequence:
$$
z_n=\begin{cases}f^n(y),& \text{for }n<0,\\
f^n(x),& \text{for }n\geq 0.\end{cases}$$
It is obvious that $(z_n)_{n\in\Z}$ is a two-sided limit pseudo-orbit. Then there exist $\bar{z}\in X$ and $K\in\{-N,\ldots,N\}$ satisfying
$$
d(f^n(\bar{z}),f^n(y))\rightarrow 0, \quad n\rightarrow-\infty,
\text{ and \,\,}d(f^{K+n}(z),f^n(x))\rightarrow 0, \quad n\rightarrow\infty.
$$
This implies that $\bar{z}\in W^u(y)$ and $f^K(\bar{z})\in W^s(x)$. So $\bar{z}\in W^s(f^{-K}(x))\cap W^u(y)$.
\end{proof}

Now we prove that the two-sided limit shadowing property with a gap implies chain-transitivity. Recall that we say that $f$ is \emph{chain-transitive} if for every $\delta>0$ and every pair of points $(x,y)$ in $X\times X$ there are $n>0$ and a finite $\delta$-pseudo-orbit $(z_i)_{i=0}^n$ such that $z_0=x$ and $z_n=y$.

\begin{lemma}\label{lem:2s-gap-CT}
If a homeomorphism $f$ of a compact metric space $X$ has the two-sided limit shadowing property with gap $N$, then $f$ is chain-transitive.
\end{lemma}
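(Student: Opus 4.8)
The plan is to derive chain-transitivity from the connecting Lemma~\ref{lem:2s-gap-connecting} together with two elementary facts about limit sets; I will not need shadowing or attractor--repeller theory. Fix $x,y\in X$ and $\delta>0$; the goal is a finite $\delta$-pseudo-orbit from $x$ to $y$. Write $u\rightsquigarrow v$ when such a pseudo-orbit exists (for this fixed $\delta$); this relation is transitive, because a finite $\delta$-pseudo-orbit from $u$ to $v$ followed by one from $v$ to $z$ is again one from $u$ to $z$. Throughout, $\omega(a)$ and $\alpha(a)$ denote the $\omega$- and $\alpha$-limit sets of $a$, i.e.\ the accumulation points of the forward and of the backward orbit of $a$; both are nonempty, closed and $f$-invariant.

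First I would record, for an arbitrary homeomorphism of a compact metric space: (a) if $w\in\omega(a)$ then $a\rightsquigarrow w$, by choosing $n\ge1$ with $d(f^n(a),w)<\delta$ and using the pseudo-orbit $a,f(a),\dots,f^{n-1}(a),w$; and (b) if $w'\in\alpha(b)$ then $w'\rightsquigarrow b$, by using uniform continuity of $f$ to choose $n\ge1$ with $d(f(w'),f^{-n+1}(b))<\delta$ and then the pseudo-orbit $w',f^{-n+1}(b),\dots,f^{-1}(b),b$. Granting (a), (b) and transitivity, it is enough to find $p\in\omega(x)$ and $q\in\alpha(y)$ with $p\rightsquigarrow q$, since then $x\rightsquigarrow p\rightsquigarrow q\rightsquigarrow y$.

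To produce such $p$ and $q$, I would pick any $w\in\omega(x)$ and $w'\in\alpha(y)$ and apply Lemma~\ref{lem:2s-gap-connecting} to the pair $(w',w)$, obtaining $K$ with $|K|\le N$ and $\bar z\in W^s(f^{-K}(w'))\cap W^u(w)$. Since $\bar z\in W^u(w)$ gives $d(f^{-n}(\bar z),f^{-n}(w))\to0$, the points $\bar z$ and $w$ have the same $\alpha$-limit set; since $\bar z\in W^s(f^{-K}(w'))$ gives $d(f^{n}(\bar z),f^{n-K}(w'))\to0$ and a bounded shift of indices does not change accumulation points, $\bar z$ and $w'$ have the same $\omega$-limit set. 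Choosing $p\in\alpha(\bar z)=\alpha(w)$ and $q\in\omega(\bar z)=\omega(w')$, fact (b) applied to $\bar z$ gives $p\rightsquigarrow\bar z$ and fact (a) applied to $\bar z$ gives $\bar z\rightsquigarrow q$, hence $p\rightsquigarrow q$. Finally, since $\omega(x)$ is closed and $f$-invariant we have $\alpha(w)\subseteq\omega(x)$, so $p\in\omega(x)$; similarly $\omega(w')\subseteq\alpha(y)$, so $q\in\alpha(y)$. This yields the required $p,q$ and finishes the proof.

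The one conceptual point to flag is that one should resist trying to build a $\delta$-pseudo-orbit straight from $x$ to $y$: Lemma~\ref{lem:2s-gap-connecting} only links the past of one point to the future of another (up to the gap $K$), so the route must pass through $\omega(x)$ and $\alpha(y)$. The only computation requiring attention is the harmless index-shift bookkeeping coming from $K$, and checking that the $\omega$- and $\alpha$-limit sets behave as claimed under the estimates produced by the connecting lemma.
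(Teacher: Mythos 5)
Your proof is correct and follows essentially the same route as the paper's: both pick points in $\omega(x)$ and $\alpha(y)$, apply Lemma~\ref{lem:2s-gap-connecting} to produce a connecting point, and concatenate finite $\delta$-pseudo-orbits through its orbit. The only difference is organizational --- you package the concatenation as a transitive relation and route the approach to $\omega(x)$ and $\alpha(y)$ through $\alpha(\bar z)$ and $\omega(\bar z)$, where the paper instead writes one explicit pseudo-orbit with $\delta/2$ estimates.
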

\begin{proof}
Let $x,y\in X$. We want to find a $\delta$-pseudo-orbit starting at $x$ and ending at $y$ for any $\delta>0$. Consider the omega-limit set of $x$ and the alpha-limit set of $y$, denoted by $\omega(x)$ and $\alpha(y)$, respectively.
Choose $z_1\in\omega(x)$ and $z_2\in\alpha(y)$. Lemma \ref{lem:2s-gap-connecting} assures the existence of an integer $K\in\{-N,\ldots,N\}$ and a point $z\in W^u(z_1)\cap W^s(f^{-K}(z_2))$. Thus there exists $M\in\N$ such that $M>N$ and  $$d(f^{-M}(z),f^{-M}(z_1))<\frac{\delta}{2} \quad\textrm{and} \quad d(f^M(z),f^{M-K}(z_2))<\frac{\delta}{2}.$$
Since $\omega(x)$ and $\alpha(y)$ are compact and invariant subsets of $X$ we have $f^{-M}(z_1)\in\omega(x)$ and $f^{M-K}(z_2)\in\alpha(y)$. Then we can choose positive integers $M_1$ and $M_2$ such that $$d(f^{M_1}(x),f^{-M}(z_1))<\frac{\delta}{2} \quad\textrm{and} \quad d(f^{-M_2}(y),f^{M-K}(z_2))<\frac{\delta}{2}.$$
Thus $$d(f^{M_1}(x),f^{-M}(z))<\delta \quad\textrm{and} \quad d(f^{-M_2}(y),f^M(z))<\delta.$$
Now consider the following sequence 
$$x_n=\begin{cases}f^n(x), &\text{for }n=0,\dots,M_1-1,\\
f^{n-M_1}(f^{-N}(z)),& \text{for } n=M_1,\dots,M_1+2M-1,\\
f^{n-(M_1+2M)}(f^{-M_2}(y)), &\text{for } n=M_1+2M,\dots,M_1+2M+M_2.
\end{cases}
$$
Elements of this sequence are ordered as follows:
$$x,f(x),\ldots,f^{M_1-1}(x),f^{-M}(z),f^{-M+1}(z),\ldots,f^{M-1}(z),f^{-M_2}(y),\ldots,y.$$
It is clear that $(x_n)_{n=0}^{M_1+2N+M_2}$ is a finite $\delta$-pseudo-orbit connecting $x$ to $y$. Since this can be done for any $\delta>0$ we conclude the proof.
\end{proof}

The proof of the next lemma is an easy exercise, hence we omit it.
\begin{lemma}\label{lem:CT+sh=trans}
If a continuous map $f\colon X\to X$ on a compact metric space is chain-transitive and has the shadowing property, then $f$ is transitive.
\end{lemma}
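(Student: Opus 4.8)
The plan is to unwind the two hypotheses against an arbitrary pair of nonempty open sets. Fix nonempty open sets $U,V\subseteq X$ and choose $x\in U$, $y\in V$ together with $\eps>0$ small enough that $B(x,\eps)\subseteq U$ and $B(y,\eps)\subseteq V$. Feeding this $\eps$ into the shadowing property produces a $\delta>0$ such that every $\delta$-pseudo-orbit is $\eps$-shadowed; here I would work with the \emph{finite} version of shadowing, which is legitimate since, as recorded in the Preliminaries, finite, one-sided and two-sided shadowing coincide in this setting.

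Next I would invoke chain-transitivity for the pair $(x,y)$ with this particular $\delta$: it yields an integer $n>0$ and a finite $\delta$-pseudo-orbit $(z_i)_{i=0}^n$ with $z_0=x$ and $z_n=y$. Shadowing then gives a point $w\in X$ with $d(f^i(w),z_i)<\eps$ for all $i=0,\dots,n$. In particular $d(w,x)<\eps$, so $w\in U$, and $d(f^n(w),y)<\eps$, so $f^n(w)\in V$. Hence $f^n(w)\in f^n(U)\cap V$, which shows $f^n(U)\cap V\ne\emptyset$ with $n\ge 1$. Since $U$ and $V$ were arbitrary nonempty open sets, $f$ is transitive.

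There is essentially no genuine obstacle; the only two points requiring a moment's care are (i) that $n$ can be taken strictly positive — which is built into the definition of chain-transitivity used here — so that the integer witnessing transitivity lies in $\N$ as demanded by our definition of transitivity, and (ii) the appeal to the equivalence of finite and two-sided shadowing. Everything else is a direct matching of the quantifiers supplied by the two hypotheses, which is why the lemma is left as an exercise.
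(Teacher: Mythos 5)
Your argument is correct and is exactly the standard one the authors have in mind when they leave this lemma as an exercise: shadow a finite $\delta$-pseudo-orbit from a point of $U$ to a point of $V$, using the noted equivalence of finite and two-sided shadowing, and observe that the shadowing point witnesses $f^n(U)\cap V\neq\emptyset$ with $n\ge 1$. Nothing further is needed.
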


In a joint work of Kulczycki, Oprocha and the second author of this paper \cite{KKO} it is proved that shadowing follows from chain-transitivity and the limit shadowing property. Following \cite{KKO} we present a simple proof of this fact for completeness.

\begin{theorem}[{\cite[Theorem 7.3]{KKO}}]\label{thm:CT+lim=sh}
If a continuous map $f\colon X\to X$ on a compact metric space is chain-transitive and has the limit shadowing property, then it has the shadowing property.
\end{theorem}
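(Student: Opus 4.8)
The plan is to fix $\eps>0$ and, using the shadowing property we are aiming for, produce a $\delta>0$ so that every two-sided $\delta$-pseudo-orbit is $\eps$-shadowed. The natural route is to convert a $\delta$-pseudo-orbit into a genuine \emph{limit} pseudo-orbit by splicing in chain segments that ``repair'' it near $\pm\infty$, then apply limit shadowing, and finally use the chain-transitivity (really: a quantitative, uniform form of it, which on a compact space follows from ordinary chain-transitivity) to control the correction. First I would record the standard reduction: it suffices to show that every \emph{finite} $\delta$-pseudo-orbit is $\eps$-shadowed with a shadowing point whose $\eps$-neighborhood constraint holds only at the finitely many indices in question, since by compactness (a diagonal/limiting argument) this yields shadowing of two-sided $\delta$-pseudo-orbits.

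Next I would exploit chain-transitivity uniformly: for every $\gamma>0$ there is $L=L(\gamma)\in\N$ such that any two points $x,y\in X$ can be joined by a $\gamma$-pseudo-orbit of length exactly $L$ — this follows from chain-transitivity together with compactness (finitely many balls, a Lebesgue-number argument, and the fact that one can pad a short chain to a prescribed length by staying near a point or iterating). Given a finite $\delta$-pseudo-orbit $(x_0,\dots,x_n)$, I would then build a two-sided \emph{limit} pseudo-orbit $(z_i)_{i\in\Z}$ as follows: on a central block place $x_0,\dots,x_n$; for $i\to-\infty$ concatenate a sequence of finite chains, the $k$-th of which is a $\gamma_k$-pseudo-orbit with $\gamma_k\to 0$, chosen (using the uniform chain-transitivity above) so that the whole left tail is a $\delta$-pseudo-orbit joining up to some fixed point, say back to $x_0$; symmetrically for $i\to+\infty$, joining forward from $x_n$. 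The construction can be arranged so that $d(f(z_i),z_{i+1})\to 0$ as $|i|\to\infty$, i.e. $(z_i)$ is a two-sided limit pseudo-orbit; it is automatically a limit pseudo-orbit in the one-sided sense as well, so the limit shadowing hypothesis applies and yields $y$ with $d(f^i(y),z_i)\to 0$ as $i\to\infty$. Restricting to the central block and discarding the (finitely many, bounded by some index shift) indices, we obtain a point that $\eps$-shadows $(x_0,\dots,x_n)$ once $\delta$ is small enough and the block is placed deep enough that the ``$\to 0$'' has already dropped below $\eps$.

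The main obstacle is the bookkeeping that makes the limit pseudo-orbit genuinely \emph{limit} while still keeping the discrepancy at the central (finite) block below $\eps$: one must interleave the shrinking chains in the tails so that the error decays to zero, yet arrange that limit shadowing forces the shadowing point to track the original block with error $<\eps$ rather than merely asymptotically. Concretely, the issue is that limit shadowing only gives control ``eventually,'' so the central block must be pushed far out along the positive axis (at index roughly $N_\eps$ where the tail error is already $<\eps$), and the left tail must be long enough that the splice near $x_0$ is $\delta$-fine; choosing $\delta$ uniformly in the length $n$ is what requires the uniform chain-transitivity lemma rather than a pointwise one. Once that lemma is in place, the rest is a routine diagonal argument to pass from finite to two-sided pseudo-orbits and to extract a uniform $\delta$ for each $\eps$.
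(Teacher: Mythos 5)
Your overall strategy --- a direct construction that embeds the given finite $\delta$-pseudo-orbit into a two-sided limit pseudo-orbit and then invokes limit shadowing --- has a gap that cannot be repaired by bookkeeping, and it is instructive to see why. The limit shadowing property only controls the tracing error \emph{asymptotically}, and the index $N$ past which $d(f^i(y),z_i)<\eps$ depends on the particular limit pseudo-orbit $(z_i)$ and on the particular tracing point $y$ that the hypothesis hands you; it is not available before you decide where to place the central block. This is a genuine circularity, not a technicality: to place the block ``deep enough'' you need to know the rate of convergence, but the rate is only determined after the pseudo-orbit (including the block's position) is fixed. A decisive way to see that the argument proves too much: the finitely many jumps inside the central block are irrelevant to the condition $d(f(z_i),z_{i+1})\to 0$ as $|i|\to\infty$, so your construction applies verbatim to an \emph{arbitrary} finite sequence $(x_0,\dots,x_n)$, not just to a $\delta$-pseudo-orbit --- and would therefore ``show'' that every finite sequence is $\eps$-shadowed, which is absurd (take a sequence alternating between two distant fixed points). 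The fineness $\delta$ never actually enters your argument, which is the symptom of the missing idea. (A secondary issue: your uniform chain-transitivity lemma asserting chains of \emph{exact} length $L(\gamma)$ between any two points is a chain-mixing statement and fails for, e.g., a two-point cycle; but this is minor by comparison.)

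The paper's proof avoids all of this by arguing by contradiction: if shadowing fails for some $\eps$, one gets for each $n$ a finite $\tfrac{1}{n}$-pseudo-orbit $\alpha_n$ that \emph{no} point $\eps$-shadows. Chain-transitivity supplies connectors $\beta_n$ so that the infinite concatenation $\alpha_1\beta_1\alpha_2\beta_2\cdots$ is a single limit pseudo-orbit (the fineness $\tfrac1n\to0$ is exactly what makes the tail errors vanish --- this is where $\delta$ genuinely enters, unlike in your construction). Limit shadowing produces one tracing point $z$; since the convergence is eventually below $\eps$ and the tail still contains infinitely many of the blocks $\alpha_n$, some $\alpha_n$ is $\eps$-shadowed by an iterate of $z$, a contradiction. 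The essential idea you are missing is this use of \emph{infinitely many} forbidden blocks of decreasing fineness within one limit pseudo-orbit, so that the unquantified ``eventually'' of limit shadowing is guaranteed to catch at least one of them.
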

\begin{proof}
Aiming for a contradiction, suppose that $f$ does not have shadowing. Hence there is $\eps>0$ such that for any $n>0$ there is a finite $\frac{1}{n}$-pseudo-orbit $\alpha_n$ which cannot be $\eps$-shadowed by any
point in $X$. Using chain transitivity, for every $n$ there exists a $\frac{1}{n}$-pseudo-orbit $\beta_n$ such that the concatenated sequence
$\alpha_n\beta_n\alpha_{n+1}$ forms a finite $\frac{1}{n}$-pseudo-orbit. Then the infinite concatenation
$$
\alpha_1 \beta_1 \alpha_2 \beta_2 \alpha_3 \beta_3 \ldots
$$
is a limit pseudo orbit denoted by $(x_j)_{j=0}^\infty$. By limit shadowing, it is limit shadowed by some point $z\in X$. Hence, starting at some index $N>0$, the point $f^{N}(z)$ also $\eps$-shadows the limit pseudo orbit $(x_j)_{j=N}^\infty$. But this means that there is a finite pseudo-orbit $\alpha_n$ which is $\eps$-shadowed by some point of the form $f^{i}(z)$, which is a contradiction.
\end{proof}

Now we are ready to prove our first main result.

\begin{mainthm}\label{thmA}
If a homeomorphism $f$ of a compact metric space $X$ has the two-sided limit shadowing property with a gap, then it is transitive and has the shadowing property.
\end{mainthm}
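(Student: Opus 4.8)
The plan is to assemble Theorem~\ref{thmA} directly from the lemmas established above. Suppose $f$ has the two-sided limit shadowing property with a gap, that is, with gap $N$ for some $N\in\N_0$. By Lemma~\ref{2s-implies-lim}, both $f$ and $f^{-1}$ have the limit shadowing property; in particular $f$ has limit shadowing. By Lemma~\ref{lem:2s-gap-CT}, $f$ is chain-transitive. Now Theorem~\ref{thm:CT+lim=sh} applies: a chain-transitive continuous map with the limit shadowing property has the shadowing property, so $f$ has shadowing. Finally, feeding chain-transitivity and shadowing into Lemma~\ref{lem:CT+sh=trans}, we conclude that $f$ is transitive. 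This gives both assertions of the theorem.

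In more detail, the deduction of shadowing is the crux of the argument, and it is exactly the content of Theorem~\ref{thm:CT+lim=sh}: one concatenates a sequence of shorter and shorter pseudo-orbits that individually fail to be $\eps$-shadowed, using chain-transitivity to bridge between consecutive pieces, obtains a genuine limit pseudo-orbit, limit-shadows it, and derives a contradiction because a tail of the limit-shadowing orbit must $\eps$-shadow one of the bad finite blocks. Once shadowing is in hand, transitivity from chain-transitivity plus shadowing (Lemma~\ref{lem:CT+sh=trans}) is the routine observation that shadowing a chain connecting neighborhoods of two points yields a genuine orbit segment realizing the same transition up to $\eps$.

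I would therefore write the proof as essentially a one-line citation chain, since every ingredient has been prepared. The only place one must be slightly careful is that Lemma~\ref{lem:2s-gap-CT} and Lemma~\ref{2s-implies-lim} are stated for ``two-sided limit shadowing with a gap'', which by definition means gap $N$ for some fixed $N$; so the hypothesis ``with a gap'' is precisely what is needed, and no uniformity beyond a single $N$ is required. I do not anticipate a genuine obstacle here: the real work was done in proving chain-transitivity (Lemma~\ref{lem:2s-gap-CT}) and in the $\mathrm{KKO}$-style argument (Theorem~\ref{thm:CT+lim=sh}), and Theorem~\ref{thmA} is just their combination.

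\begin{proof}
Assume $f$ has the two-sided limit shadowing property with gap $N$ for some $N\in\N_0$. By Lemma~\ref{2s-implies-lim}, $f$ has the limit shadowing property, and by Lemma~\ref{lem:2s-gap-CT}, $f$ is chain-transitive. Theorem~\ref{thm:CT+lim=sh} then implies that $f$ has the shadowing property. Finally, by Lemma~\ref{lem:CT+sh=trans}, a chain-transitive map with the shadowing property is transitive, so $f$ is transitive. This proves both claims.
\end{proof}
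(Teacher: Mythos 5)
Your proof is correct and follows exactly the same citation chain as the paper's own argument: Lemma~\ref{2s-implies-lim} for limit shadowing, Lemma~\ref{lem:2s-gap-CT} for chain-transitivity, Theorem~\ref{thm:CT+lim=sh} for shadowing, and Lemma~\ref{lem:CT+sh=trans} for transitivity. No issues.
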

\begin{proof}
By Lemma \ref{2s-implies-lim} $f$ has the limit shadowing property. Also, Lemma \ref{lem:2s-gap-CT} says that $f$ is chain-transitive. Then Theorem \ref{thm:CT+lim=sh} together with Lemma \ref{lem:CT+sh=trans} concludes the proof.
\end{proof}

Toward proving Theorem \ref{thmB} we recall some definitions and a result from \cite{KKO}.

\begin{definitions}[Average shadowing]
We say that $\{x_i\}_{i\in\N_0}$ is a \emph{$\delta$-average-pseudo-orbit} if there is an integer $N=N(\delta)\in\N$ such that for every $n\geq N$ and $k\geq0$ it holds $$\frac{1}{n}\sum_{i=0}^{n-1}d(f(x_{i+k})),x_{i+k+1})<\delta.$$
We say that $\{x_i\}_{i\in\N_0}$ is $\eps$-\emph{shadowed in average} if there exists $y\in X$ satisfying $$\limsup_{n\to\infty}\frac{1}{n}\sum_{i=0}^{n-1}d(f^i(y),x_i)<\eps.$$
We say that $f$ has the \emph{average shadowing property} if for every $\eps>0$ there exists $\delta>0$ such that every $\delta$-average-pseudo-orbit is $\eps$-shadowed in average. This property was introduced by Blank in \cite{B} and explored further by Zhang in \cite{Z}.
\end{definitions}

\begin{definitions}[Asymptotic average shadowing]
We say that $\{x_i\}_{i\in\N_0}$ is an \emph{asymptotic average-pseudo-orbit} if it satisfies $$\frac{1}{n}\sum_{i=0}^{n-1}d(f(x_i)),x_{i+1})\to0, \,\,\,\,\,\, n\to\infty.$$
We say that $\{x_i\}_{i\in\N_0}$ is \emph{asymptotically shadowed in average} if there exists $y\in X$ satisfying $$\frac{1}{n}\sum_{i=0}^{n-1}d(f^i(y),x_i)\to0, \,\,\,\,\,\, n\to\infty.$$
We say that $f$ has the \emph{asymptotic average shadowing property} if every asymptotic-average-pseudo-orbit is asymptotically shadowed in average. This property was introduced by Gu and Xia in \cite{GX}.
\end{definitions}


\begin{theorem}[{\cite[Theorem 3.8]{KKO}}]\label{thm:sh,tt,mix,..}
If $f\colon X\to X$ is a homeomorphism with the shadowing property then the following conditions are equivalent:
\begin{enumerate}
  \item $f$ is totally transitive, i.e., $f^n$ is transitive for every $n\in\N$;
  \item $f$ is topologically mixing;
  \item $f$ has the average shadowing property;
  \item $f$ has the asymptotic average shadowing property;
  \item $f$ has the specification property.
\end{enumerate}
\end{theorem}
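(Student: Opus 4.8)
The plan is to establish the five conditions equivalent by the web of implications $(2)\Rightarrow(1)$, $(1)\Rightarrow(2)$, $(2)\Leftrightarrow(5)$, $(2)\Rightarrow(3),(4)$ and $(3),(4)\Rightarrow(2)$; throughout, the standing hypothesis that $f$ has the shadowing property is exactly what allows one to promote ``chain-level'' statements to genuine topological ones, in the spirit of Lemma~\ref{lem:CT+sh=trans}. (The statement is quoted from \cite[Theorem 3.8]{KKO}, so strictly speaking a reference suffices; what follows is an outline of how the argument runs.) The routine directions are $(2)\Rightarrow(1)$, which is immediate since topological mixing of $f$ forces $f^n$ to be mixing, hence transitive, for every $n\in\N$; $(1)\Rightarrow(2)$, for which I would invoke the cyclic decomposition available for homeomorphisms with the shadowing property, namely that a transitive such $f$ splits $X$ into closed sets $D_0\cup\dots\cup D_{k-1}$ with $f(D_i)=D_{i+1}$ (indices mod $k$) and each $f^{k}|_{D_i}$ topologically mixing (see \cite{AH}), so that total transitivity rules out $k\ge 2$ and $f$ is mixing; and $(5)\Rightarrow(2)$, which is the well-known fact recalled above from \cite{DGS}.

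The first substantial step is $(2)\Rightarrow(5)$. Fix $\eps>0$ and let $\delta>0$ be a shadowing constant for $\eps$. Using topological mixing together with compactness and the uniform continuity of $f$, I would first produce a single $L\in\N$ such that for all $x,y\in X$ and all $n\ge L$ there is a $\delta$-pseudo-orbit of length $n$ from $x$ to $y$: cover $X$ by finitely many small balls and apply mixing to each pair of them. Given an $L$-spaced specification $(\tau,P)$ with $\tau=\{I_1,\dots,I_m\}$, $I_j=[a_j,b_j]\cap\Z$, one builds a $\delta$-pseudo-orbit indexed by $[a_1,b_m]\cap\Z$ that agrees with $t\mapsto P(t)$ on each $I_j$ and fills each gap $(b_j,a_{j+1})$, which has length at least $L$, by such a connecting chain. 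A point $\eps$-shadowing this pseudo-orbit then $\eps$-shadows the specification $(\tau,P)$.

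It remains to treat the two shadowing-in-average properties; since $(2)\Leftrightarrow(5)$ is now available, I would use the specification property freely. For $(5)\Rightarrow(3)$ and $(5)\Rightarrow(4)$ the idea is to cut a $\delta$-average (respectively asymptotic-average) pseudo-orbit into long consecutive blocks: a Markov-type estimate shows that, outside a set of indices of upper density controlled by $\delta$ (respectively of density zero), the pseudo-orbit restricted to the resulting sub-blocks is a genuine $\delta'$-pseudo-orbit, which is shadowed by a true orbit segment; one then glues these countably many orbit segments into a single orbit using the specification property together with a compactness argument over finite truncations, and checks that the resulting point traces the original pseudo-orbit to within $\eps$ off a set of density $0$, hence shadows it in average (resp. asymptotically in average). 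For the return implications $(3)\Rightarrow(2)$ and $(4)\Rightarrow(2)$ I would first show that the (asymptotic) average shadowing property by itself forces $f$ to be chain mixing — if $f$ had chain-period $p\ge 2$, then alternately spending blocks of rapidly growing length in the $p$ cyclic chain-components would produce a $\delta$-average pseudo-orbit that no point shadows in average — and then deduce topological mixing from chain mixing and shadowing by the obvious mixing analogue of Lemma~\ref{lem:CT+sh=trans} (shadow, for each large $n$, a $\delta$-chain of length $n$ between prescribed points).

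The step I expect to be the main obstacle is $(2)\Rightarrow(3),(4)$: organising the block decomposition, estimating the densities of the exceptional indices, merging sub-blocks that are too close to be $L$-spaced, and carrying out the passage to the limit in the gluing so that one single point shadows infinitely many orbit segments, all require careful bookkeeping. The subsidiary claim that (asymptotic) average shadowing implies chain mixing, although intuitively clear from the block-alternation construction, is the other point that needs work to be made rigorous.
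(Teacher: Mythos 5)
The paper offers no proof of this statement---it is quoted verbatim from \cite[Theorem 3.8]{KKO}---so your citation already suffices, and your outline faithfully reproduces the strategy of the cited proof: the cyclic (regular periodic / chain) decomposition to pass between total transitivity and mixing, mixing plus shadowing yielding specification by shadowing connecting pseudo-orbits, block-decomposition and density estimates for the two average shadowing properties, and the alternating-block construction showing that (asymptotic) average shadowing forces chain mixing, which shadowing then upgrades to topological mixing. The two steps you flag as delicate are indeed where the real work lies in \cite{KKO}, but nothing in your sketch is off-track.
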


Since two-sided limit shadowing implies shadowing and transitivity, above Theorem reduces the proof of Theorem \ref{thmB} to the following:

\begin{lemma}\label{lem:total-2s}
If a homeomorphism $f\colon X\to X$ has the two-sided limit shadowing property, then $f^n$ also has the two-sided limit shadowing property for every $n\in\Z\setminus\{0\}$.
\end{lemma}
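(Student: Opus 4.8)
The plan is to show that $g = f^n$ inherits the two-sided limit shadowing property directly from $f$, by checking that the two notions of "limit pseudo-orbit" and "limit shadowed" for $g$ translate back to the corresponding notions for $f$. First I would treat the case $n \ge 1$; the case $n \le -1$ will follow because $f$ and $f^{-1}$ have the two-sided limit shadowing property simultaneously (this is immediate from the symmetry of the definition, swapping $i \mapsto -i$), so $(f^{-1})^{|n|} = (f^n)^{-1}$ works as well, and a homeomorphism has the property iff its inverse does.

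So fix $n \ge 1$ and let $(x_i)_{i \in \Z}$ be a two-sided limit pseudo-orbit for $g = f^n$, i.e. $d(f^n(x_i), x_{i+1}) \to 0$ as $|i| \to \infty$. I would build a two-sided limit pseudo-orbit $(w_j)_{j \in \Z}$ for $f$ whose "$n$-skeleton" is exactly $(x_i)$. The natural choice is to interpolate: set $w_{ni + r} = f^r(x_i)$ for $i \in \Z$ and $r \in \{0, 1, \dots, n-1\}$. Then for the $f$-consecutive pairs inside one block, $d(f(w_{ni+r}), w_{ni+r+1}) = 0$ when $0 \le r \le n-2$, and at the block boundary $d(f(w_{ni+n-1}), w_{n(i+1)}) = d(f^n(x_i), x_{i+1})$, which tends to $0$ as $|i| \to \infty$; since only the boundary terms are nonzero and they vanish, $(w_j)_{j \in \Z}$ is a two-sided limit pseudo-orbit for $f$. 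By the two-sided limit shadowing property of $f$ there is $y \in X$ with $d(f^j(y), w_j) \to 0$ as $|j| \to \infty$. Restricting to indices $j = ni$ gives $d(f^{ni}(y), w_{ni}) = d(g^i(y), x_i) \to 0$ as $|i| \to \infty$, so $y$ two-sided limit shadows $(x_i)$ with respect to $g$. This proves $g$ has the two-sided limit shadowing property.

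I do not expect any serious obstacle here: the argument is a routine "refine on one side, restrict on the other" interpolation, of the same flavor as the proof of Lemma \ref{lem:2s-gap-CT}. The only point requiring a line of care is that the restriction $|j| \to \infty$ through the subsequence $j = ni$ still forces $|i| \to \infty$, which is clear, and that the finitely many (bounded) block-interior errors being identically zero means the pseudo-orbit condition for $f$ genuinely reduces to the one assumed for $g$. One could alternatively phrase the whole thing using the uniform continuity of $f, f^2, \dots, f^{n-1}$ to handle an approximate interpolation, but the exact interpolation above avoids even that. Finally, note that this lemma, combined with Theorem \ref{thmA} (which gives shadowing and transitivity, hence in particular that $f$ is totally transitive once we know every $f^n$ is transitive) and Theorem \ref{thm:sh,tt,mix,..}, yields Theorem \ref{thmB}.
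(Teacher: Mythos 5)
Your proposal is correct and follows essentially the same route as the paper: interpolate the $f^n$-pseudo-orbit into an $f$-pseudo-orbit by inserting the exact intermediate iterates $f^r(x_i)$, shadow it for $f$, and restrict to indices divisible by $n$. Your explicit reduction of the case $n\le -1$ to $f^{-1}$ is in fact slightly more careful than the paper, whose single formula is written with positive $n$ in mind.
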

\begin{proof}
Fix $n\in\Z\setminus\{0\}$. Let $(x_i)_{i\in\Z}$ be a two-sided limit pseudo-orbit for $f^n$. Then the sequence $(y_j)_{j\in\Z}$ defined by
$$
y_k=\begin{cases} x_i, &\text{if }k=in\text{ for some }i\in\Z,\\
f^{k-in}(x_i), &\text{if }in < k <(i+1)n\text{ for some }i\in\Z.
\end{cases}
$$
is a two-sided limit pseudo-orbit for $f$. Then it is two-sided limit shadowed for $f$ by a point $z\in X$. Hence the sequence $(x_i)_{i\in\Z}$ is two-sided limit shadowed by the $f^n$-orbit of $z$. This concludes the proof.
\end{proof}

\begin{mainthm}\label{thmB}
If a homeomorphism of a compact metric space has the two-sided limit shadowing property then it is topologically mixing and has specification, average shadowing and asymptotic average shadowing.
\end{mainthm}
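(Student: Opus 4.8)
The proof is essentially an assembly of the results established so far. The plan is to show that a homeomorphism $f$ with the two-sided limit shadowing property satisfies the hypotheses of Theorem \ref{thm:sh,tt,mix,..}, namely that it has the shadowing property and is totally transitive; the four conclusions (topological mixing, average shadowing, asymptotic average shadowing and the specification property) then follow at once from the equivalences in that theorem.

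First I would observe that the two-sided limit shadowing property coincides with two-sided limit shadowing with gap $0$, hence in particular $f$ has two-sided limit shadowing with a gap; Theorem \ref{thmA} then gives that $f$ has the shadowing property and is transitive. Next, to upgrade transitivity to total transitivity, I would invoke Lemma \ref{lem:total-2s}: for every $n\in\N$ the power $f^n$ again has the two-sided limit shadowing property, so a second application of Theorem \ref{thmA} shows that each $f^n$ is transitive, i.e., $f$ is totally transitive. Finally, with shadowing and total transitivity in hand, Theorem \ref{thm:sh,tt,mix,..} yields that $f$ is topologically mixing and has the average shadowing, asymptotic average shadowing and specification properties, completing the proof. (One could drop ``topologically mixing'' from the list since, as recalled in the Preliminaries, specification already implies it.)

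There is no real obstacle here, as all the substantial work has been done in the preceding lemmas; the only point deserving attention is the role of gap $0$. The argument above uses Lemma \ref{lem:total-2s}, which is proved only for gap $0$, and this restriction is essential: two-sided limit shadowing with gap $N\ge 1$ is \emph{not} inherited by powers --- indeed the two-point cycle mentioned in the Introduction has gap $1$, yet $f^2$ is the identity on a two-point space and is not even transitive. Thus Theorem \ref{thmB}, unlike Theorem \ref{thmA}, genuinely requires gap $0$ rather than merely a gap.
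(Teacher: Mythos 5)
Your proposal is correct and follows exactly the paper's argument: apply Lemma \ref{lem:total-2s} together with Theorem \ref{thmA} to get shadowing and total transitivity, then conclude via Theorem \ref{thm:sh,tt,mix,..}. Your closing remark about gap $0$ being essential (via the two-point cycle) matches the paper's own discussion following its example.
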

\begin{proof}
Since two-sided limit shadowing implies transitivity, Lemma \ref{lem:total-2s} implies that $f$ is totally transitive. Since it also implies shadowing, Theorem \ref{thm:sh,tt,mix,..} concludes the proof.
\end{proof}


As a consequence of Theorem \ref{thmB} we know that any homeomorphism with the two-sided limit shadowing property has all features of systems with the specification property, and these are numerous (see Chapter 21 of \cite{DGS} for details). For example by Proposition 21.6 in \cite{DGS} we obtain that $f$ has positive topological entropy. We note that this also implies that there are no homeomorphisms with the two-sided limit shadowing property on the unit interval or the unit circle since there are no homeomorphism with positive topological entropy on them (see \cite{W} Lemmas 7.14 and 7.14.1).

Finally we exhibit a simple example of a homeomorphism on a compact metric space which has the two-sided limit shadowing property with a gap but does not have the two-sided limit shadowing property.

\begin{example}
Consider $X=\{a,b\}$ and define $f\colon X\to X$ by $f(a)=b$ and $f(b)=a$. It is easy to see that $f$ is a homeomorphism which is not topologically mixing, so Theorem \ref{thmB} shows that $f$ does not have the two-sided limit shadowing property. We claim that $f$ has the two-sided limit shadowing property with gap $1$. For each limit pseudo-orbit $(x_n)_{n\in\N}$ there exists $N\in\N$ such that $x_{N+k}=a$ if $k$ is even and $x_{N+k}=b$ if $k$ is odd. So for each two-sided limit pseudo-orbit $(x_n)_{n\in\Z}$ there exist $N_1,N_2\in\N$ such that $x_{N_1+k}=a$, $x_{-N_2-k}=a$ if $k$ is even and $x_{N_1+k}=b$, $x_{-N_2-k}=b$ if $k$ is odd. It follows that if $N_1=N_2\mod2$ then either $a$ or $b$ two-sided limit shadows $(x_n)_{n\in\Z}$. Otherwise, (if $N_1=N_2+1\mod2$) neither $a$, nor $b$ two-sided limit shadows $(x_n)_{n\in\Z}$, but it is easy to check that one of them two-sided limit shadows $(x_n)_{n\in\Z}$ with gap $1$. Hence $f$ has the two-sided limit shadowing with gap $1$.
\end{example}

The above example also shows that topologically mixing is not necessary for the two-sided limit shadowing property with a gap, and the same holds for the average shadowing, asymptotic average shadowing and specification properties. We proved that  $f$ as above has the two-sided limit shadowing property with gap $1$, but obviously $f^2$ does not, so an analog of Lemma \ref{lem:total-2s} for two-sided limit shadowing with a gap is not true. In view of all results of this section the following question seems natural:

\begin{q}\label{question}
Does every homeomorphism with shadowing and specification properties have the two-sided limit shadowing property?
\end{q}


\section{Two-sided limit shadowing for expansive homeomorphisms}

Recall that a homeomorphism $f\colon X\to X$ is \emph{expansive} if there exists a constant $\eps>0$ (called \emph{expansivity constant}) such that if $x,y\in X$ satisfy $d(f^n(x),f^n(y))\leq\eps$ for every $n\in\Z$, then $x=y$. The first author of this paper in \cite[Lemma 2.2]{C} answered question \ref{question} assuming that $f$ is expansive:

\begin{lemma}[\cite{C}]\label{lem:Bernardo} Every expansive homeomorphism $f\colon X\to X$ with the shadowing and specification properties has the two-sided limit shadowing property.
\end{lemma}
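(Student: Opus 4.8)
The plan is to combine expansivity with the specification property to upgrade an ordinary $\eps$-shadowing point into a genuine two-sided limit shadowing point, using a diagonal/telescoping argument. Let $c>0$ be an expansivity constant and let $(x_i)_{i\in\Z}$ be a two-sided limit pseudo-orbit for $f$. First I would fix a decreasing sequence $\eps_k\to 0$ with $\eps_0$ small compared to $c$, and for each $k$ obtain from the specification property the corresponding spacing $L_k\in\N$; since $(x_i)$ is a two-sided limit pseudo-orbit there is $m_k\to\infty$ such that $d(f(x_i),x_{i+1})$ is small enough (in terms of $\eps_k$ and $L_k$) for all $|i|\ge m_k$. The key point is that on each window $[-m_{k+1},-m_k]\cup[m_k,m_{k+1}]$ the sequence is very close to a true orbit segment, so we may replace it by a genuine orbit segment $P$ defined on finitely many blocks; inserting gaps of length $L_k$ between consecutive blocks and filling them by specification yields, for each $k$, a point $y_k$ that $\eps_k$-shadows the (slightly perturbed) pseudo-orbit on $[-m_{k+1},m_{k+1}]$. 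Actually, one should be slightly careful and, as in \cite{C}, build a single specification per scale so that the blocks already traced at finer scales are not disturbed.

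The second step is to pass to a limit. By compactness a subsequence of $(y_k)$ converges to some $y\in X$; I would argue that $y$ itself $\eps_k$-shadows the pseudo-orbit on the window $[-m_k,m_k]$ for every $k$, up to replacing $\eps_k$ by $2\eps_k$ or so. The cleanest way to see this is to observe that, because all the $y_k$ for $k'\ge k$ stay within $\eps_k$ of the same finite orbit segment on $[-m_k,m_k]$ and $f$ is expansive, these points are in fact pairwise $2\eps_k$-close there; letting $k'\to\infty$ transfers the estimate to $y$. Consequently $d(f^i(y),x_i)\le 2\eps_k$ for $m_k\le|i|\le m_{k+1}$, say, which forces $d(f^i(y),x_i)\to 0$ as $|i|\to\infty$, i.e. $y$ two-sided limit shadows $(x_i)_{i\in\Z}$.

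The main obstacle, and the place I would spend the most care, is the bookkeeping that makes the scales compatible: when passing from scale $k$ to scale $k+1$ one must extend the specification by appending new blocks on $[-m_{k+1},-m_k]$ and $[m_k,m_{k+1}]$ \emph{without} moving the previously chosen orbit segments, and simultaneously keep the inserted gaps long enough ($\ge L_{k+1}$) and the pseudo-orbit errors small enough that the new shadowing point still $\eps_{k+1}$-traces everything. Here expansivity is doing the essential work: it guarantees that an orbit segment that is $\eps_k$-traced on a long window is essentially unique, so the finer-scale choices cannot contradict the coarser ones, and the limit point inherits all the estimates at once. Once this is set up, transitivity of $f$ (which holds since specification implies topological mixing, hence transitivity) is what allows the gaps to be filled, and no further hypotheses are needed. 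A short remark at the end would note that this recovers, for expansive systems, a positive answer to Question \ref{question}.
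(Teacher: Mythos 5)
Note first that the paper does not prove this lemma itself; it imports it as Lemma~2.2 of \cite{C}, so your argument has to stand on its own. The overall shape (multi-scale use of specification, expansivity, a compactness limit) is reasonable, but the step ``inserting gaps of length $L_k$ between consecutive blocks'' is precisely where the construction breaks. Inserting extra time between blocks shifts the index of every subsequent block, so the point produced in the limit satisfies $d(f^{i+s_k}(y),x_i)\to 0$ with a cumulative shift $s_k$ that grows with the number of scales; at best this yields two-sided limit shadowing with an (a priori unbounded) gap, not gap $0$, which is the whole content of the lemma as opposed to Theorem~\ref{thmC}. The spacing demanded by the specification property must instead be carved out of the original time axis: take the blocks to be, say, $[m_k+L_{k+1},m_{k+1}]$ and $[-m_{k+1},-m_k-L_{k+1}]$ with $P(i)$ a genuine orbit point shadowing $x_i$ \emph{at the same index} $i$, choosing the $m_k$ to grow fast enough that consecutive blocks are automatically $L_{k+1}$-spaced. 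No re-indexing may occur anywhere.

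The second genuine gap is the scale-compatibility you yourself flag and then dispose of by appeal to expansivity. Expansivity gives uniqueness of a point whose \emph{full} orbit $\eps$-tracks given data; it does not make a specification at scale $\eps_{k+1}$ respect the blocks chosen at scale $\eps_k$, and indeed it cannot, because those blocks are only $L_k$-spaced while the finer scale requires $L_{k+1}\ge L_k$. One must instead replace the already-constructed part by the single orbit block $\{f^i(y_k):|i|\le m_k\}$ of the previous shadowing point, control the accumulated error $\sum_{j>k}\eps_j$, and separately argue about indices lying inside the spacing gaps, which no specification block ever controls. (Also, once the $y_{k'}$ are known to track the same segment, their pairwise closeness is the triangle inequality, not expansivity; and it is specification, not transitivity, that fills the gaps.) All of this can be avoided by the cleaner route that is essentially the argument of \cite{C}: the forward and backward tails of $(x_i)_{i\in\Z}$ are limit pseudo-orbits for $f$ and $f^{-1}$, and expansivity plus shadowing gives limit shadowing in both time directions, producing $p$ with $d(f^i(p),x_i)\to 0$ as $i\to\infty$ and $q$ with $d(f^i(q),x_i)\to 0$ as $i\to-\infty$; then a single application of specification to the two orbit blocks $\{f^i(q):i\le -L\}$ and $\{f^i(p):i\ge L\}$ (truncated and letting the length tend to infinity), followed by a compactness limit and the standard fact that for expansive homeomorphisms local stable sets are contained in stable sets, yields $z\in W^s(p)\cap W^u(q)$ with the correct synchronization, and $z$ two-sided limit shadows $(x_i)_{i\in\Z}$.
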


In this section we extend Lemma \ref{lem:Bernardo}, and at the same time we extend Corollary 7.5 of \cite{KKO} (in case of homeomorphisms) and the main Theorem of \cite{LS} (note that the definition of \emph{limit shadowing} in the former is different than here, but these notions are equivalent thanks to \cite[Theorem 7.5]{KKO}, where limit shadowing of  \cite{LS} appears as \emph{s-limit shadowing}).

\begin{mainthm}\label{thmC}
Let $f$ be a transitive and expansive homeomorphism of a compact metric space $X$. Then the following are equivalent:
\begin{enumerate}
  \item $f$ has the shadowing property
  \item $f$ has the limit shadowing property
  \item $f$ has the two-sided limit shadowing property with a gap.
\end{enumerate}
\end{mainthm}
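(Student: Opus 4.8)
The plan is to prove the chain of implications $(1)\Rightarrow(2)\Rightarrow(3)\Rightarrow(1)$, so that all three become equivalent. The implication $(3)\Rightarrow(1)$ is already free: Theorem~\ref{thmA} tells us that two-sided limit shadowing with a gap implies shadowing (and transitivity), so nothing more is needed there. The implication $(1)\Rightarrow(2)$ is the classical fact, valid for any expansive homeomorphism with shadowing, that shadowing implies limit shadowing; the standard argument runs as follows. Given a limit pseudo-orbit $(x_i)_{i\ge 0}$ and the expansivity constant $c>0$, fix a decreasing sequence $\eps_k\to 0$ with $\eps_0<c$, pick $\delta_k$ from the shadowing property for $\eps_k$, and choose indices $0=n_0<n_1<\cdots$ such that $(x_i)_{i\ge n_k}$ is a $\delta_k$-pseudo-orbit. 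Shadowing produces points $z_k$ that $\eps_k$-shadow $(x_i)_{i\ge n_k}$; using expansivity and compactness one shows the orbit segments of $f^{-n_k}(z_k)$ converge, after passing to a subsequence, to a single point $z$ whose forward orbit limit-shadows $(x_i)$. (One uses here that two points whose orbits stay $\eps_k$-close on an ever-growing window, with $\eps_k\to 0$, must eventually agree; this is the only place expansivity enters.) I expect this to be the technically fussiest step, mainly bookkeeping with the indices $n_k$ and the diagonal/compactness argument.

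The substantive implication is $(2)\Rightarrow(3)$: from limit shadowing (of $f$) plus transitivity and expansivity, deduce two-sided limit shadowing with a gap. First note transitivity gives chain-transitivity, and chain-transitivity together with limit shadowing gives shadowing by Theorem~\ref{thm:CT+lim=sh}; moreover $f$ is then transitive with shadowing, and by a standard argument an expansive homeomorphism with shadowing has canonical coordinates / local product structure and decomposes into finitely many basic pieces permuted cyclically by $f$—since $f$ is transitive there is just one piece, of period, say, $p$, and each $f^p$-piece is topologically mixing. The key to controlling the ``gap'' is the shadowing/expansivity structure: the uniform $\eps$-shadowing of finite pseudo-orbits, combined with expansivity, yields that $f$ restricted to a transitive basic set has the specification property up to the period $p$ (this is the Bowen-type argument, already packaged for mixing pieces in Theorem~\ref{thm:sh,tt,mix,..}, which identifies mixing $+$ shadowing with specification). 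Thus $f^p$ has specification, and by Lemma~\ref{lem:Bernardo} applied to $f^p$ (which is expansive, has shadowing, and has specification) $f^p$ has the genuine two-sided limit shadowing property, gap $0$.

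It then remains to lift this to $f$ with a bounded gap. Given a two-sided limit pseudo-orbit $(x_i)_{i\in\Z}$ for $f$, one wants to break it at $0$ into a backward limit-pseudo-orbit-tail and a forward one, replace each tail by an actual orbit (using limit shadowing of $f$ and of $f^{-1}$, available by Lemma~\ref{2s-implies-lim}? — no: here we only have limit shadowing of $f$, so for the backward tail one instead uses that an expansive transitive $f$ with shadowing also has limit shadowing for $f^{-1}$, which follows symmetrically from $(1)\Rightarrow(2)$ applied to $f^{-1}$, noting $f^{-1}$ is again transitive expansive with shadowing). This produces a point $u$ with $d(f^i(u),x_i)\to 0$ as $i\to-\infty$ and a point $v$ with $d(f^i(v),x_i)\to 0$ as $i\to+\infty$. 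Now $f^{-m}(u)$ for large $m$ lands near the basic set, and likewise $f^{m}(v)$; using specification of $f^p$ one finds a single point $w$ that shadows (hence, by an asymptotic refinement of specification coming from expansivity) is backward-asymptotic to the orbit of $u$ and forward-asymptotic, after a shift of at most $p-1$, to the orbit of $v$. That shift is the gap $K$, and $|K|\le p-1=:N$. The main obstacle, and the place to be careful, is precisely this last gluing: making the asymptotic (not merely $\eps$-) two-sided tracing work while keeping the period-induced offset bounded by $N=p-1$; this is where expansivity is used a second time, to upgrade $\eps$-shadowing for all $\eps$ into honest asymptotic shadowing, exactly as in the proof of Lemma~\ref{lem:Bernardo}.
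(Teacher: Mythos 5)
Your proposal is correct and follows essentially the same route as the paper: Theorem~\ref{thmA} for $(3)\Rightarrow(1)$, the equivalence of shadowing and limit shadowing for expansive maps (which the paper simply cites as Corollary~7.5 of \cite{KKO}, whereas you sketch the standard expansivity argument), and then the Spectral Decomposition Theorem plus Lemma~\ref{lem:Bernardo} applied to $f^{p}$ on a mixing piece, with the gap bounded by the period of the cyclic permutation of the pieces. The only real divergence is in the last step: the paper never produces your intermediate orbits $u$ and $v$; it reindexes the given two-sided limit pseudo-orbit directly into a two-sided limit pseudo-orbit for $f^{N}$ inside a single piece $B_0$ (padding finitely many terms arbitrarily), applies the two-sided limit shadowing of $f^{N}|_{B_0}$ already furnished by Lemma~\ref{lem:Bernardo}, and transports the conclusion back to $f$ by uniform continuity. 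Your gluing of the backward orbit of $u$ to the forward orbit of $f^{-K}(v)$ via specification plus expansivity does work, but it amounts to re-running the proof of Lemma~\ref{lem:Bernardo} by hand; you could shorten it by observing that the two tails, sampled along an arithmetic progression of step $p$ and shifted into a common piece, already form a two-sided limit pseudo-orbit for $f^{p}$ on that piece, to which the gap-$0$ property you just established applies directly.
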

\begin{proof}
The equivalence of $(1)$ and $(2)$ follows from Corollary 7.5 of \cite{KKO}. The implication from $(3)$ to $(1)$ is proved in Theorem \ref{thmA}. It remains to prove that shadowing and transitivity implies two-sided limit shadowing with a gap for expansive homeomorphisms. It follows from the Spectral Decomposition Theorem (\cite[Theorem 3.1.11]{AH}) for topologically Anosov maps that there exists an integer $N$ such that $X$ can be written as a disjoint union, $X=B_0\cup\ldots\cup B_{N-1}$ of non-empty closed sets satisfying
$$
f(B_i)=B_{(i+1)\bmod N} \quad\text{for }i=0,\ldots,N-1,
$$
and such that $f^N|_{B_i}\colon B_i \to B_i$ is topologically mixing (hence has the specification property) for each $i$. Note that this happens because $f$ is transitive, so it has only one chain-recurrent class. We claim that $f$ has the two-sided limit shadowing property with gap $N-1$. Let $(x_n)_{n\in\Z}$ be a two-sided limit pseudo-orbit for $f$. It is easy to see that there exists positive integers $L_1,L_2\in\N$ such that
$$
x_{L_1+k}\in B_{k\bmod N}\text{ and } x_{-L_2-k}\in B_{(-k)\bmod N}\quad\text{for each }k\ge 0.
$$
Let $K=(L_1+L_2)\bmod{N}$.
Take any $n_0\in \N$ such that $(-L_2)\bmod{N}+Nn_0\ge L_1$. It follows that
$$
f^K(x_{(-L_2)\bmod N+Nn})\in B_0\quad\text{for }n> n_0.
$$
Let $p_1,\ldots,p_{n_0}$ be any points in $B_0$.
Then the sequence $(y_n)_{n\in\Z}$ defined by
$$
y_n=\begin{cases}
x_{(-L_2\bmod N)+Nn},&\text{for }n\le 0,\\
p_n,&\text{for }1\le n\le n_0,\\
f^{K}(x_{(-L_2)\bmod N+Nn})&\text{for }n>n_0,
\end{cases}
$$
is a two-sided limit pseudo-orbit for $f^N$ contained in $B_0$. By Lemma \ref{lem:Bernardo} $f^N$ restricted to $B_0$ has the two-sided limit shadowing property, since $f^N|_{B_0}$ has both shadowing and specification. Therefore $(y_n)_{n\in\Z}$ is two-sided limit shadowed for $f^N$ by a point $z\in B_0$. It implies that
$f^{-K}(z)$ limit shadows for $f^N$ the limit pseudo-orbit $(f^{-K}(y_n))_{n=0}^\infty$ (this is a limit pseudo-orbit with respect to $f^N$).
For $n>n_0$ we have $x_{(-L_2)\bmod N+Nn}=f^{-K}(y_n)$. Hence the sequence $(x_n)_{n\in\Z}$ is two-sided limit shadowed for $f$ with gap $K$ by the point $z\in X$ since uniform continuity of $f$ and two-sided limit shadowing for $f^N$ clearly imply
\begin{align*}
d(f^i(z),x_i)\to 0, \,\,\,\,\,\, i\to-\infty,\\
d(f^{-K+i}(z),x_i)\to 0 \,\,\,\,\,\, i\to\infty.
\end{align*}
This concludes the proof.
\end{proof}

Now we characterize the two-sided limit shadowing property and the two-sided limit shadowing property with a gap for expansive homeomorphisms:

\begin{mainthm}\label{thmD}
Let $f$ be an expansive homeomorphism of a compact metric space $X$. Then
\begin{enumerate}
  \item $f$ is transitive and has the shadowing property if and only if $f$ has the two-sided limit shadowing property with a gap;
  \item $f$ is topologically mixing and has the shadowing property if and only if $f$ has the two-sided limit shadowing property.
\end{enumerate}
\end{mainthm}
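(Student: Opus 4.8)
\section*{Proof proposal for Theorem~\ref{thmD}}

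The proof is a bookkeeping exercise assembling the results already established; the plan is to treat the two equivalences in turn, each reducing to a short chain of implications, with no new construction of pseudo-orbits or shadowing points required.

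For part~(1), the backward implication is exactly Theorem~\ref{thmA}: two-sided limit shadowing with a gap already implies transitivity and the shadowing property, and expansiveness plays no role there. For the forward implication, assume $f$ is expansive, transitive and has the shadowing property. Then $f$ meets the standing hypotheses of Theorem~\ref{thmC} (transitive and expansive) together with its condition~(1), hence it satisfies condition~(3) of that theorem, which is precisely the two-sided limit shadowing property with a gap. This settles part~(1).

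For part~(2), the forward implication starts from $f$ expansive, topologically mixing and with the shadowing property. Since $f$ has the shadowing property, the equivalence of~(2) and~(5) in Theorem~\ref{thm:sh,tt,mix,..} upgrades topological mixing to the specification property; now $f$ is expansive with both shadowing and specification, so Lemma~\ref{lem:Bernardo} yields the two-sided limit shadowing property. (Alternatively one could note that topological mixing forces the number $N$ of pieces in the spectral decomposition used in the proof of Theorem~\ref{thmC} to equal~$1$, so the gap there is~$0$; but the route through Lemma~\ref{lem:Bernardo} is cleaner.) For the backward implication, assume $f$ has the two-sided limit shadowing property. By Theorem~\ref{thmB} it is topologically mixing, and since gap~$0$ is a particular case of having a gap, Theorem~\ref{thmA} gives the shadowing property as well. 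This settles part~(2).

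The only point requiring care is matching hypotheses: Theorem~\ref{thmC} carries transitivity and expansiveness in its statement, so one must verify these are in force before invoking the equivalence $(1)\Leftrightarrow(3)$ there; and Lemma~\ref{lem:Bernardo} needs shadowing and specification \emph{simultaneously}, which is why the detour through Theorem~\ref{thm:sh,tt,mix,..} is needed in part~(2). Beyond organizing these citations there is no genuine obstacle.
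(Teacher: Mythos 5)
Your proposal is correct and follows essentially the same route as the paper: Theorem~\ref{thmA} and Theorem~\ref{thmC} for part~(1), and Lemma~\ref{lem:Bernardo} plus Theorem~\ref{thmB} for part~(2). The only cosmetic difference is that for the step ``mixing $+$ shadowing $\Rightarrow$ specification'' you invoke the equivalence (2)$\Leftrightarrow$(5) of Theorem~\ref{thm:sh,tt,mix,..}, whereas the paper cites an external result on expansive homeomorphisms; both are valid.
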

\begin{proof}
By Theorem \ref{thmA} we know that the two-sided limit shadowing property with a gap implies shadowing and transitivity. The converse under additional assumption of expansivity follows from Theorem \ref{thmC}. This concludes the proof of the first equivalence. The second equivalence follows from Lemma \ref{lem:Bernardo} (noting that topologically mixing expansive homeomorphisms with shadowing have specification, see \cite[Theorem 11.5.13]{AH}) and from Theorem \ref{thmB}.
\end{proof}

By the Spectral Decomposition Theorem (\cite[Theorem 3.1.11]{AH}), an expansive homeomorphism of a compact and connected metric space with the shadowing property is topologically mixing if and only if it is transitive. Hence we have the following corollary.

\begin{theorem}\label{generalized-Bernardo}
If $f$ is an expansive homeomorphism of a compact and connected metric space $X$, then the following are equivalent:
\begin{enumerate}
  \item $f$ is transitive and has the shadowing property;
  \item $f$ is topologically mixing and has the shadowing property;
  \item $f$ has the two-sided limit shadowing property with a gap;
  \item $f$ has the two-sided limit shadowing property.
\end{enumerate}
\end{theorem}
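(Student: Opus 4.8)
The plan is to obtain this statement as an essentially immediate corollary of Theorem \ref{thmD}, the only extra ingredient being a short connectedness argument that collapses the cyclic spectral decomposition to a single piece.

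First I would check that, under the standing hypothesis that $f$ has the shadowing property, statements (1) and (2) are equivalent. An expansive homeomorphism with shadowing is topologically Anosov, so if it is moreover transitive the Spectral Decomposition Theorem (\cite[Theorem 3.1.11]{AH}), applied exactly as in the proof of Theorem \ref{thmC}, produces a decomposition $X=B_0\cup\ldots\cup B_{N-1}$ into pairwise disjoint nonempty closed sets with $f(B_i)=B_{(i+1)\bmod N}$ and $f^N|_{B_i}$ topologically mixing for each $i$. Since the $B_i$ are pairwise disjoint, closed, and cover $X$, each $B_i$ is also open; as $X$ is connected and every $B_i$ is nonempty, this forces $N=1$, so $f=f^N|_{B_0}$ is itself topologically mixing. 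The converse implication (2)$\Rightarrow$(1) is trivial, since topological mixing implies transitivity.

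Next I would invoke Theorem \ref{thmD} directly: its part (1) gives the equivalence (1)$\Leftrightarrow$(3), and its part (2) gives (2)$\Leftrightarrow$(4). Combining these with the equivalence (1)$\Leftrightarrow$(2) just established, all four conditions are equivalent, which completes the proof.

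I do not anticipate any real obstacle here, as the substantive work is already contained in Theorems \ref{thmA}, \ref{thmB}, \ref{thmC} and \ref{thmD}. The only point needing a moment's care is the observation that the pieces $B_i$ of the spectral decomposition are clopen, so that connectedness genuinely forces $N=1$; this is routine, since a finite partition of a space into closed sets is automatically a partition into clopen sets.
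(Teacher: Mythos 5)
Your proposal is correct and follows essentially the same route as the paper: the paper also deduces the theorem from Theorem \ref{thmD} after observing that, by the Spectral Decomposition Theorem, a transitive expansive homeomorphism with shadowing on a connected space must be topologically mixing. Your explicit remark that the pieces $B_i$ are clopen (so connectedness forces $N=1$) just spells out the detail the paper leaves implicit.
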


A well known result of Walters \cite{WaltersPOTP} (see also \cite[Theorem 2.3.18]{AH}) characterizes invertible shifts of finite type as the shift space with the shadowing property. Using this theorem and our Theorem \ref{thmD} we obtain characterizations of transitive and topologically mixing shifts of finite type in terms of two-sided limit shadowing properties.

\begin{mainthm}\label{thmE}
Let $X$ be a shift space. Then
\begin{enumerate}
  \item $X$ is a transitive shift of finite type if and only if $\sigma\colon X\to X$ has the two-sided limit shadowing property with a gap;
  \item $X$ is topologically mixing shift of finite type if and only if $\sigma\colon X\to X$ has the two-sided limit shadowing property.
\end{enumerate}
\end{mainthm}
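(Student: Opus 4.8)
The plan is to combine the characterization of two-sided limit shadowing (with a gap) for expansive homeomorphisms obtained in Theorem \ref{thmD} with the classical theorem of Walters identifying shift spaces with the shadowing property as the shifts of finite type. First I would recall that the shift map $\sigma\colon X\to X$ on any shift space $X\subset A^{\Z}$ over a finite alphabet is an expansive homeomorphism of a compact metric space (expansivity constant any $\eps$ smaller than the distance forcing agreement at coordinate $0$). This is exactly the hypothesis needed to invoke Theorem \ref{thmD}.

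For part (1): if $\sigma$ has the two-sided limit shadowing property with a gap, then by Theorem \ref{thmD}(1) it is transitive and has the shadowing property; Walters' theorem then gives that $X$ is a shift of finite type, and transitivity of $\sigma$ means precisely that $X$ is a transitive shift of finite type. Conversely, if $X$ is a transitive SFT, then $\sigma$ is transitive and, again by Walters' theorem, has the shadowing property; since $\sigma$ is expansive, Theorem \ref{thmD}(1) yields the two-sided limit shadowing property with a gap. Part (2) is entirely analogous, replacing transitivity by topological mixing throughout and using Theorem \ref{thmD}(2) in place of Theorem \ref{thmD}(1): topological mixing of $\sigma$ is by definition what it means for $X$ to be a topologically mixing shift space, so the equivalence passes through unchanged.

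I do not expect any genuine obstacle here; the theorem is essentially a dictionary translation. The only points requiring a word of care are (a) noting explicitly that $\sigma$ is expansive so that Theorem \ref{thmD} applies, and (b) making sure the version of Walters' theorem being cited is the two-sided (invertible) one, so that it matches our standing assumption that $f$ is a homeomorphism — this is why the excerpt cites \cite{WaltersPOTP} together with \cite[Theorem 2.3.18]{AH}. With those two observations in place, the proof is two short paragraphs chaining the quoted results.
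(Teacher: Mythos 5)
Your proposal is correct and is exactly the argument the paper intends: the authors derive Theorem \ref{thmE} by combining Walters' characterization of shifts of finite type via shadowing with Theorem \ref{thmD}, using that the shift map on a shift space is an expansive homeomorphism. Nothing is missing.
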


\section{Examples}

In this section we prove two results which will allow us to provide examples of homeomorphisms with the two-sided shadowing property
\begin{enumerate}
  \item which are non-expansive;
  \item without the periodic specification property.
\end{enumerate}

\begin{definitions}[Shift spaces]
For any compact metric space $(X,d)$ we consider $X^{\Z}$ the product of countable many copies of $X$ with the Tichonov (product) topology. Points in $X^{\Z}$ are sequences $(x_n)_{n\in\Z}$ whose all coordinates belong to $X$. We define a map $\sigma:X^{\Z}\to X^{\Z}$ by $$\sigma((x_n)_{n\in\Z})=(x_{n+1})_{n\in\Z}.$$ This map is called the \emph{shift map}. It is easily seen that $\sigma$ is a homeomorphism of a compact space $X^{\Z}$. A special case of this construction is a \emph{full $r$-shift}  $\Omega_r=X^\Z$, where $r\in\N$ and $X=\Lambda_r=\{0,1,\ldots,r-1\}$ is equipped with discrete metric. Any $\sigma$-invariant and closed subset of $\Omega_r$ is a \emph{shift space}. A shift space $Z\subset \Omega_r$ is a \emph{subshift of finite type} if there exists $N$ and a subset $\mathcal{L}$ of $\prod_{i=0}^N\Lambda_r$ such that $x=(x_n)_{n\in\Z}\in Z$ if and only if $x_jx_{j+1}\ldots x_{j+N}\in \mathcal{L}$ for every $j\in\Z$.
\end{definitions}

We prove the following:
\begin{theorem}\label{shift}
For every compact metric space $X$ the shift map $\sigma\colon X^\Z\to X^\Z$ has the two-sided limit shadowing property.
\end{theorem}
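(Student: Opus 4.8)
The plan is to show directly that any two-sided limit pseudo-orbit for $\sigma$ on $X^\Z$ is two-sided limit shadowed, by diagonalizing coordinate-by-coordinate. First I would fix a two-sided limit pseudo-orbit $(\mathbf{x}^{(i)})_{i\in\Z}$ in $X^\Z$, where each $\mathbf{x}^{(i)}=(x^{(i)}_n)_{n\in\Z}\in X^\Z$, and recall that the product topology on $X^\Z$ is metrized by something like $D(\mathbf{u},\mathbf{v})=\sum_{n\in\Z}2^{-|n|}d(u_n,v_n)$ (after rescaling $d$ to be bounded by $1$). The condition $D(\sigma(\mathbf{x}^{(i)}),\mathbf{x}^{(i+1)})\to 0$ as $|i|\to\infty$ unwinds, coordinatewise, to: for each fixed $n\in\Z$, $d(x^{(i)}_{n+1},x^{(i+1)}_n)\to 0$ as $|i|\to\infty$. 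The natural candidate to do the shadowing is the point $\mathbf{y}\in X^\Z$ whose $n$-th coordinate is $y_n=x^{(0)}_n$ — i.e.\ just take $\mathbf{y}=\mathbf{x}^{(0)}$ — because then $\sigma^i(\mathbf{y})$ has $n$-th coordinate $x^{(0)}_{n+i}$, and we want to compare this with $x^{(i)}_n$.

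The key computation is then to estimate $D(\sigma^i(\mathbf{y}),\mathbf{x}^{(i)})=\sum_{n\in\Z}2^{-|n|}d(x^{(0)}_{n+i},x^{(i)}_n)$ and show it tends to $0$ as $|i|\to\infty$. The step I would carry out next is a telescoping argument: along a finite chain from index $0$ to index $i$, the discrepancy $d(x^{(0)}_{n+i},x^{(i)}_n)$ is bounded (via the triangle inequality) by a sum of the coordinate pseudo-orbit errors $d(x^{(j)}_{m+1},x^{(j+1)}_m)$ for appropriate intermediate $j$'s and $m$'s, and each such term goes to $0$ as the indices grow. Because in the metric $D$ the weights $2^{-|n|}$ decay, I only need uniform control of finitely many coordinates at a time together with a crude bound (by $1$) on the tail; a standard $\eps/2$ argument (choose a window $|n|\le M$ so the tail is $<\eps/2$, then make all the finitely many window terms $<\eps/2$ by taking $|i|$ large) finishes it.

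The main obstacle — and the only place requiring care — is making the telescoping estimate uniform enough: for fixed $n$, the quantity $d(x^{(0)}_{n+i},x^{(i)}_n)$ has to be bounded by a finite sum of pseudo-orbit errors whose number \emph{grows} with $|i|$, so a naive bound does not obviously go to zero. The right way to handle this is to note that the pseudo-orbit condition gives, for \emph{every} $\eta>0$, an index $N(\eta)$ beyond which \emph{all} relevant coordinate errors are $<\eta$, and to split the telescoping chain into a bounded initial/final part (contributing a fixed, small amount once we first choose things well) and a long middle part where every error is $<\eta$; choosing $\eta$ small relative to the (bounded) number of "bad" initial terms and then taking $|i|$ large controls everything. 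I would phrase this cleanly by first proving the one-sided statement (the chain from $0$ to $i$ as $i\to+\infty$, using that the errors with large positive index are small) and then noting the $i\to-\infty$ case is symmetric, invoking that $\sigma^{-1}$ is again a shift map so the same argument applies verbatim. Alternatively, one observes that $X^\Z$ with $\sigma$ is conjugate to its own inverse via coordinate reversal, so one direction suffices.
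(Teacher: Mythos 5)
Your overall strategy (metrize the product with weights $2^{-|n|}$, pick a candidate point, telescope via the triangle inequality, and handle the far coordinates by a window-plus-tail argument) is sound, but your candidate shadowing point is wrong, and the correct choice is precisely the one nontrivial idea of the proof. You take $\mathbf{y}=\mathbf{x}^{(0)}$, whereas the point that works is the \emph{diagonal} point $y$ with $y_j=x^{(j)}_0$ for $j\in\Z$, i.e.\ the $j$-th coordinate of $y$ is the $0$-th coordinate of the $j$-th element of the pseudo-orbit. With your choice, the fixed-$n$ discrepancy $d(x^{(0)}_{n+i},x^{(i)}_n)$ telescopes through the pseudo-orbit steps at \emph{all} indices $j=0,1,\dots,i-1$ (the chain is forced, since each application of the pseudo-orbit relation raises the superscript by one and lowers the coordinate by one), so it necessarily passes through the small indices $j$, where the hypothesis gives no control; worse, those early steps are evaluated at coordinates near $n+i-j-1\approx n+i$, where smallness of $D(\sigma(\mathbf{x}^{(j)}),\mathbf{x}^{(j+1)})$ only yields $d\le 2^{|n+i-j-1|}\cdot(\text{small})$, a useless bound. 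Your proposed repair (bounded bad initial part plus long good middle part) does not close this: the "bounded initial part" consists of boundedly many terms each of which can equal the diameter of $X$, and they do not shrink as $i\to\infty$; and in the "good middle part" the errors are small only after reweighting by $2^{|m|}$ with $m$ of order $i$.

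This is not merely a gap in the estimate: the conclusion is false for your point. Let $X=\{0,1\}$, let $\mathbf{x}^{(j)}$ be the all-zeros sequence for $j\le 0$, and let $\mathbf{x}^{(j)}=\sigma^{j-1}(w)$ for $j\ge 1$, where $w_m=0$ for $m\le 0$ and $w_m=1$ for $m\ge 1$. The only nonzero step error occurs at $j=0$, so this is a two-sided limit pseudo-orbit; but $\mathbf{x}^{(0)}$ is a fixed point of $\sigma$ while $d(x^{(0)}_i,x^{(i)}_0)=d(0,1)=1$ for all $i\ge 2$, so $D(\sigma^i(\mathbf{x}^{(0)}),\mathbf{x}^{(i)})\not\to 0$. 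The diagonal point (here $y=\sigma^{-1}(w)$) does shadow: for it, the chain estimating the $k$-th coordinate of $D(\sigma^m(y),\mathbf{x}^{(m)})$ uses only pseudo-orbit steps with indices between $m$ and $m+k$ (all of absolute value at least $|m|-|k|$, hence eventually controlled), evaluated at coordinates $0,1,\dots,k$; the resulting factor of order $2^{|k|}$ is exactly cancelled by the weight $2^{-|k|}$ in the metric. That cancellation is what drives the paper's proof, and it is unavailable for $\mathbf{y}=\mathbf{x}^{(0)}$.
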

\begin{proof}
As two sided limit shadowing does not depend on the choice of metric for the space we can pick any equivalent metric for $X^\Z$.
We equip $X^\Z$ with the metric $D$ defined for $x=(x_{j})_{j\in\Z}, y=(y_{j})_{j\in\Z}\in X^\Z$ by
\[
D(x,y)=\sup_{j\in\Z} \frac{d(x^{(j)},y^{(j)})}{2^{|j|}},
\]
where $d$ is any metric for $X$ such that $\diam X \le 1$.
Let $\{x^{(n)}\}_{n\in\Z}$ be any two-sided limit pseudo-orbit for $\sigma$. It follows that for every $p\ge 1$ there is $N_p$ such that
$D(\sigma(x^{(m)}),x^{(m+1)})\le 2^{-p-1}$ for all $|m|\ge N_p$. It implies that for $m$ as before and for each $j\in\Z$ we have
\begin{equation}\label{ineq:2k}
\frac{1}{2^{p+1}}\ge D(\sigma(x^{(m)}),x^{(m+1)})\ge \frac{1}{2^{|j|}} d(x^{(m)}_{j+1},x^{(m+1)}_j).
\end{equation}
Define a point $x\in X^\Z$ by $x_j=x^{(j)}_0$ for $j\in\Z$, in other words
\[x=(\ldots,x^{(-1)}_0,x^{(0)}_0,x^{(1)}_0\ldots)\]
We claim that $x$ two-sided asymptotically traces $\{x^{(n)}\}_{n\in\Z}$. To see this, note first that
\[
\sigma^m(x)_k=x_0^{(m+k)}\quad\text{for all }k,m\in\Z.
\]
On the other hand
\[
D(\sigma^m(x),x^{(m)})=\sup_{j\in\Z}\frac{1}{2^{|j|}}d(\sigma^m(x)_j,x^{(m)}_j)=\sup_{j\in\Z}\frac{1}{2^{|j|}}d(x_0^{(m+j)},x^{(m)}_j).
\]
By the triangle inequality for all $m\in\Z$ and $k$ positive
\begin{multline*}
d(x_0^{(m+k)},x^{(m)}_k)\le \sum_{j=0}^{k-1}d(x_j^{(m+k-j)},x^{(m+k-j-1)}_{j+1})=
\\=d(x_0^{(m+k)},x^{(m+k-1)}_1)+d(x_1^{(m+k-1)},x^{(m+k-2)}_2)+\ldots+d(x_{k-1}^{(m+1)},x^{(m)}_k).
\end{multline*}
Similarly, for $k$ negative
\begin{multline*}
d(x_0^{(m+k)},x^{(m)}_k)\le \sum_{j=0}^{|k|-1}d(x_{-j}^{(m+k+j)},x^{(m+k+j+1)}_{-j-1})=
\\=d(x_0^{(m+k)},x^{(m+k+1)}_{-1})+d(x_{-1}^{(m+k+1)},x^{(m+k+2)}_{-2})+\ldots+d(x_{k+1}^{(m-1)},x^{(m)}_k).
\end{multline*}

If $m> N_p$, $k>0$ and $0\le j <k$, then we may apply \eqref{ineq:2k} and obtain
\[
d(x_j^{(m+k-j)},x^{(m+k-j-1)}_{j+1})\le \frac{2^{|j|}}{2^{p+1}}.
\]
In particular,
\[
\sum_{j=0}^{k-1}d(x_j^{(m+k-j)},x^{(m+k-j-1)}_{j+1})\le \frac{1+2+\ldots+2^{k-1}}{2^{p+1}}\le \frac{2^{k}}{2^{p+1}}
\]
If $m<-N_p$, $k<0$ and $0\le j \le |k|-1$, then we may apply \eqref{ineq:2k} and obtain
\[
d(x_{-j}^{(m+k+j)},x^{(m+k+j+1)}_{-j-1})\le\frac{2^{j+1}}{2^{p+1}}.
\]
In particular,
\[
\sum_{j=0}^{|k|-1}d(x_{-j}^{(m+k+j)},x^{(m+k+j+1)}_{-j-1})\le \frac{2+2^2+\ldots+2^{|k|}}{2^{p+1}}\le \frac{2^{|k|}}{2^{p}}.
\]
Hence $D(\sigma^m(x),x^{(m)})\le 2^{-p}$ for $|m|\ge N_p$, and the proof is finished. \end{proof}

It follows from theorem above and Theorem \ref{thmB} that for every compact metric space $X$ the shift map $\sigma\colon X^\Z\to X^\Z$ has specification and shadowing. But if the topological dimension of $X$ is non-zero, then no homeomorphism of $X^\Z$ can be expansive by \cite[Theorem 2.3.13]{AH} (this is a corollary to a theorem of Ma\~{n}e \cite{Mane} (see also \cite[Theorem 2.2.40]{AH}), which states that if $f\colon X\to X$ is an expansive homeomorphism of a compact metric space, then the topological dimension of $X$ is finite). Hence we obtain the following:

\begin{corollary}
There exists a non-expansive homeomorphism with the two-sided limit shadowing property.
\end{corollary}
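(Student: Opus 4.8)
The plan is to combine Theorem~\ref{shift} with Ma\~n\'e's theorem on expansive homeomorphisms, exactly as the surrounding text already hints. First I would invoke Theorem~\ref{shift} to produce, for a suitably chosen compact metric space $X$, a homeomorphism---namely the shift map $\sigma\colon X^\Z\to X^\Z$---that has the two-sided limit shadowing property. The only remaining task is to arrange that this particular homeomorphism fails to be expansive. The key observation is that $X^\Z$ is a compact metric space, so if $\sigma$ were expansive then by Ma\~n\'e's theorem (\cite[Theorem 2.2.40]{AH}) the topological dimension of $X^\Z$ would be finite.

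So the second step is a choice of $X$ that forces $\dim X^\Z=\infty$. Taking $X=[0,1]$ (or any compact space of positive topological dimension) works: the product of infinitely many copies of a positive-dimensional space has infinite topological dimension, and more directly \cite[Theorem 2.3.13]{AH} states that no homeomorphism of $X^\Z$ can be expansive when $\dim X>0$. Thus with $X=[0,1]$ the shift map $\sigma\colon [0,1]^\Z\to[0,1]^\Z$ is a non-expansive homeomorphism, and by Theorem~\ref{shift} it has the two-sided limit shadowing property. This proves the corollary.

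I expect there to be essentially no obstacle here: the corollary is a two-line deduction from Theorem~\ref{shift} and a cited structural theorem about expansive homeomorphisms. The only point requiring a modicum of care is making sure the space $X$ chosen is genuinely a compact metric space of positive topological dimension---the closed unit interval is the cleanest such example---so that the dimension obstruction \cite[Theorem 2.3.13]{AH} applies verbatim to $X^\Z$. One could phrase the whole argument in a single sentence, but I would spell out the choice $X=[0,1]$ explicitly so the reader sees a concrete witness.

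Concretely, the write-up is: \emph{Let $X=[0,1]$ with the Euclidean metric. By Theorem~\ref{shift} the shift map $\sigma\colon X^\Z\to X^\Z$ has the two-sided limit shadowing property. Since $X$ has positive topological dimension, \cite[Theorem 2.3.13]{AH} (a consequence of Ma\~n\'e's theorem \cite{Mane}) shows that no homeomorphism of $X^\Z$ is expansive; in particular $\sigma$ is not expansive.} This establishes the existence claimed in the corollary.
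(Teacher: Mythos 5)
Your argument is correct and is exactly the paper's own: apply Theorem~\ref{shift} to a positive-dimensional compact space such as $X=[0,1]$ and rule out expansivity of any homeomorphism of $X^\Z$ via Ma\~n\'e's theorem as quoted in \cite[Theorem 2.3.13]{AH}. Nothing is missing.
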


\begin{center}
\begin{figure}[ht]\label{fig}
\includegraphics{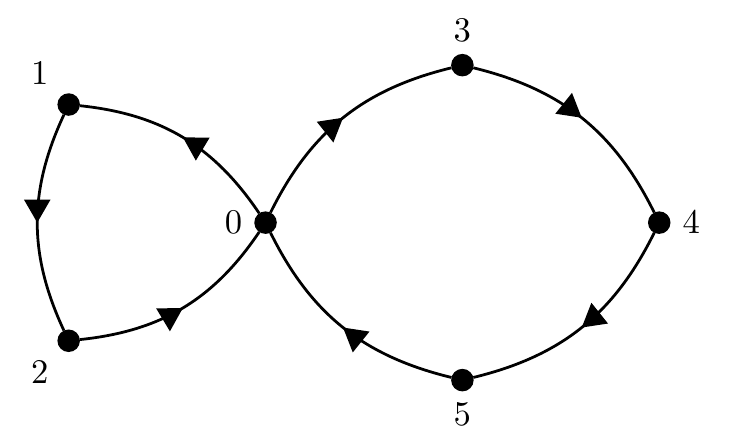}
\caption{A graph presenting the shift space $X_{(3,4)}$, given by the set $\mathcal{L}=\{01,12,20,03,34,45,50\}$.}
\end{figure}
\end{center}

\begin{theorem}\label{product}
Assume that for each $n\in \N$ we have a compact metric space $X_n$ and a homeomorphism $f_n\colon X_n\to X_n$ with the two-sided limit shadowing property. Let $X=\prod_{n\in\N} X_n$ be the product metric space and $F\colon X\to X$ be the homeomorphism given by
$$
F(x_1,x_2,x_3,\ldots)=(f_1(x_1),f_2(x_2),f_3(x_3),\ldots).
$$
Then $F$ has the two-sided limit shadowing property.
\end{theorem}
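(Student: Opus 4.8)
The plan is to reduce the statement for the infinite product $F$ to the two-sided limit shadowing property of each factor $f_n$, using that the product metric weights the $n$-th coordinate by a summable factor (say $2^{-n}$), so that only finitely many coordinates matter "up to $\eps$". First I would fix a metric for $X$: writing $d_n$ for a metric on $X_n$ with $\diam X_n\le 1$, set $D(x,y)=\sum_{n\in\N}2^{-n}d_n(x_n,y_n)$. The key elementary observation is that $D(F^i(z),x^{(i)})\to 0$ as $|i|\to\infty$ if and only if for \emph{every} fixed $n$ we have $d_n(f_n^i(z_n),x^{(i)}_n)\to 0$ as $|i|\to\infty$; the "if" direction uses dominated convergence / the tail estimate $\sum_{n>K}2^{-n}\le 2^{-K}$, and the "only if" direction is immediate since each coordinate projection is $1$-Lipschitz up to the constant $2^n$.

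The main steps are then: (1) Let $(x^{(i)})_{i\in\Z}$ be a two-sided limit pseudo-orbit for $F$. Since $D(F(x^{(i)}),x^{(i+1)})\to 0$ as $|i|\to\infty$, the same tail estimate shows that for each fixed $n$ the sequence of $n$-th coordinates $(x^{(i)}_n)_{i\in\Z}$ is a two-sided limit pseudo-orbit for $f_n$. (2) Apply the two-sided limit shadowing property of $f_n$ to obtain, for each $n$, a point $y_n\in X_n$ with $d_n(f_n^i(y_n),x^{(i)}_n)\to 0$ as $|i|\to\infty$. (3) Set $y=(y_1,y_2,y_3,\ldots)\in X$. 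By the equivalence noted above, $y$ two-sided limit shadows $(x^{(i)})_{i\in\Z}$ for $F$, provided we can upgrade the \emph{pointwise-in-$n$} convergence of step (2) to the convergence of $D(F^i(y),x^{(i)})$.

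The one point that needs genuine care — and is the main obstacle — is precisely this last upgrade: step (2) gives, for each $n$, a rate of convergence that may depend on $n$, so we cannot naively interchange the sum over $n$ with the limit over $i$. The remedy is the standard $\eps/2$ splitting of the series: given $\eps>0$, choose $K$ with $\sum_{n>K}2^{-n}<\eps/2$ (using $\diam X_n\le 1$ this bounds the tail of $D(F^i(y),x^{(i)})$ uniformly in $i$), and then choose $N_0$ so large that for each of the finitely many indices $n\le K$ we have $d_n(f_n^i(y_n),x^{(i)}_n)<\eps/2$ whenever $|i|\ge N_0$; summing the head and the tail gives $D(F^i(y),x^{(i)})<\eps$ for $|i|\ge N_0$. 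The identical splitting handles step (1). This concludes the proof.
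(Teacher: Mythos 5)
Your proposal is correct and follows essentially the same route as the paper: decompose the pseudo-orbit coordinatewise, shadow each coordinate with $f_n$, and reassemble; the paper uses the equivalent sup-weighted metric $D(x,y)=\sup_n 2^{-n}d_n(x_n,y_n)$ and simply declares both directions ``easy to see,'' whereas you supply the $\eps/2$ tail-splitting that justifies them. No gap.
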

\begin{proof}
We equip $X$ with the metric $D$ defined for $x=(x_n)_{n\in\N}, y=(y_n)_{n\in\N}\in X$ by
\[
D(x,y)=\sup_{n\in\N} \frac{d_n(x_n,y_n)}{2^n},
\]
where $d_n$ is any metric for $X_n$ such that the diameter of $X_n$ with respect to $d_n$ fulfills $\diam X_n \le 1$.
It is then easy to see that if $(x^{(j)})_{j\in\Z}$ is a two-sided limit pseudo-orbit for $F$, then for each
$n\in\N$ the sequence $(x^{(j)}_n)_{j\in\Z}$ is a two-sided limit pseudo-orbit for $f_n$. Let $y_n$ be a point which two-sided limit shadows
$(x^{(j)}_n)_{j\in\Z}$ with respect to $f_n$. It is again easy to see that $(y_1,y_2,y_3,\ldots)$ two-sided limit shadows $(x^{(j)})_{j\in\Z}$ for $F$.
\end{proof}

Let $p$ and $q$ be relatively prime integers. Set $r=p+q-1$. Define a shift of finite type $X_{(p,q)}\subset\Omega_r$ by specifying
\begin{multline*}
\mathcal{L}=\{01,12,\ldots,(p-2)(p-1),(p-1)0,\\ 0p,p(p+1),\ldots,(p+q-2)(p+q-1),(p+q-1)0\}.
\end{multline*}
In other words, $X_{(p,q)}$ consists of sequences of vertices visited during a bi-infinite walk on the directed graph with two loops: one of length $p$ with vertices labelled $0,\ldots,p-1$ and one of length $q$ with vertices labelled $0,p,p+1,\ldots,p+q-2$.
Since the graph is connected and has two cycles with relatively prime lengths it presents a topologically mixing shift of finite type (for details see \cite{LM}). Moreover, this shift of finite type does not have any periodic point with primary period smaller than $\min\{p,q\}$. Let $(p_j)_{j=1}^\infty$ be a strictly increasing sequence of prime numbers. Let $X_n=X_{(p_n,p_{n+1})}$ for $n\in\N$, and $\sigma_n$ be a shift transformation on $\Omega_{p_{n}+p_{n+1}-1}$ restricted to $X_n$. This family fulfills the assumptions of Theorem \ref{product} so the product system has the two-sided limit shadowing property. It is easy to see that the product system $F=\sigma_1\times\sigma_2\times\ldots$ on $X=\prod_{n=1}^\infty X_n$ also has the specification property, but this specification property is not periodic, since there are no periodic points for $F$ in $X$. Hence we obtain our last result.

\begin{corollary}
There exists a homeomorphism with the two-sided limit shadowing property but without the periodic specification property.
\end{corollary}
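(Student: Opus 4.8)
The plan is to realize the desired homeomorphism as a countable product of topologically mixing shifts of finite type, relying on the fact (Theorem \ref{product}) that such products inherit the two-sided limit shadowing property even though the product may fail to have any periodic points at all.

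Concretely, I would fix a strictly increasing sequence $(p_n)_{n\in\N}$ of primes, set $X_n=X_{(p_n,p_{n+1})}$, and let $\sigma_n$ be the shift restricted to $X_n$. Since $X_{(p,q)}$ is presented by a connected graph carrying two cycles of coprime lengths $p$ and $q$, it is a topologically mixing shift of finite type; hence by Theorem \ref{thmE}(2) (equivalently Theorem \ref{thmD}(2)) each $\sigma_n$ has the two-sided limit shadowing property. Applying Theorem \ref{product} to the family $\{(X_n,\sigma_n)\}_{n\in\N}$ produces the homeomorphism $F=\sigma_1\times\sigma_2\times\cdots$ of $X=\prod_{n\in\N}X_n$ with the two-sided limit shadowing property, and Theorem \ref{thmB} then upgrades this to the (ordinary) specification property.

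The remaining task is to rule out the \emph{periodic} specification property, which I would do by showing that $F$ has no periodic points whatsoever. If $F^k(x)=x$ with $k\ge 1$ and $x=(x_n)_n$, then $\sigma_n^k(x_n)=x_n$ for every $n$, so each $x_n$ is a periodic point of $\sigma_n$ of primary period dividing $k$, in particular at most $k$. But $X_{(p_n,p_{n+1})}$ contains no periodic point of primary period smaller than $\min\{p_n,p_{n+1}\}=p_n$, and $p_n>k$ once $n$ is large; this contradiction shows that $F$ has no periodic points. Since applying the periodic specification property to the trivial specification consisting of a single prescribed point would force the existence of a periodic point, $F$ cannot enjoy that property, and the corollary follows.

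I do not anticipate a real obstacle here: all the substantive content is already packaged in Theorems \ref{thmB} and \ref{product}. The only items requiring a little care are the standard combinatorial fact that $X_{(p,q)}$ has no short periodic orbits (immediate from the graph presentation, since any periodic walk must traverse one of the two loops entirely) and the routine bookkeeping needed to invoke Theorem \ref{product}.
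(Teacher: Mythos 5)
Your proposal is correct and follows essentially the same construction as the paper: the product $F=\sigma_1\times\sigma_2\times\cdots$ of the mixing shifts of finite type $X_{(p_n,p_{n+1})}$, combined with Theorem \ref{product} and the observation that $F$ has no periodic points because $X_{(p,q)}$ has no periodic orbit of primary period below $\min\{p,q\}$. Your explicit justification that the absence of periodic points kills periodic specification is a small but welcome addition to what the paper leaves implicit.
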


\section*{Acknowledgements}
The first named author thanks Welington Cordeiro for helpful discussions during the preparation of this paper. The second named author is grateful to Jakub Byszewski for fruitful discussions and to Piotr Oprocha for sharing an idea leading to a simplified presentation of examples in the last section.

\end{document}